\title{Doubly nonlinear parabolic equation involving a mixed local-nonlocal operator and a convection term}
\author{Loïc Constantin\footnotemark[1]\footnote{Corresponding author. LMAP, UMR E2S UPPA CNRS 5142, Bâtiment IPRA, Université de Pau et Pays de l’Adour, Avenue de l’Université, BP 1155, 64013, Pau
Cedex, France, France. E-mail: loic.constantin@etud.univ-pau.fr}, 
Carlota M Cuesta\footnotemark[2]\footnote{University of the Basque Country (UPV/EHU), Faculty of Science and Technology, Department of Mathematics, Aptdo. 644, 48080 Bilbao, Spain. E-mail: carlotamaria.cuesta@ehu.es}}
\date{}
\newtheorem{definition}{Definition}[section]
\newtheorem{proposition}[definition]{Proposition}
\newtheorem{corollary}[definition]{Corollary}
\newtheorem{lemma}[definition]{Lemma}
\newtheorem{theorem}[definition]{Theorem}
\newtheorem{remark}[definition]{Remark}
\newtheorem{propriete}[definition]{Property}
\newcommand{\Rn}{{\mathbb{R}^d}}
\newcommand{\W}{{\mathcal{W}}}
\newcommand{\pfrac}{{(-\Delta)^s_p}}
\newcommand{\qfrac}{{(-\Delta)^s_q}}
\newcommand{\p}{{p}}
\newcommand{\Wsqz}{{W^{s,q}_0(\Omega)}}
\newcommand{\Wp}{{W^{1,\p}(\Omega)}}
\newcommand{\Wpz}{{W^{1,\p}_0(\Omega)}}
\newcommand{\dt}{{\Delta t}}
\newcommand{\ff}{{\overset{\to}{f}}}
\newcommand{\A}{{\mathcal{A}_\mu}}
\renewenvironment{abstract}
{\begin{quote}
\noindent \rule{\linewidth}{.5pt}\par{\bfseries \abstractname.}}
{\medskip\noindent \rule{\linewidth}{.5pt}
\end{quote}
}
\numberwithin{equation}{section}
\DeclareMathOperator{\Div}{div}
\begin{document}
\maketitle
\begin{abstract}
In this paper we study a doubly degenerate parabolic equation involving a convection term and the operator $\A u:=-\Delta_p u +\mu \qfrac u$ which is a linear combination of the $p$-Laplacian and the fractional $q$-Laplacian, and results in a mixed local-nonlocal nonlinear operator. The problem we study is the following, 
\begin{equation*}
    \begin{cases}
     \partial_t \beta(u)+ \A  u= \Div (\overset{\to}{f}(u))+g(t,x,u) \quad \text{in} \;Q_T:=(0,T)\times \Omega,\\
      u=0 \quad \text{in} \;  (0,T)\times (\Rn\backslash \Omega),\\
      u(0)=u_0 \text{ in } \Omega.
    \end{cases}\
\end{equation*}
We discuss existence, uniqueness and qualitative behavior of, what we call {\it weak-mild} solutions, that is weak solutions of this problem that when interpreted as $v=\beta(u)$ they are a mild solutions. In particular, we investigate stabilization to steady state, extinction and blow up in finite time and show how the occurrence of such behaviors depend on specific conditions on the nonlinearities $\beta$ (typically of porous media type), $\overset{\to}{f}$ and the source term $g$, and on their relation, in terms of certain regularity and growth conditions. 
\end{abstract}

\emph{Keywords.} Doubly nonlinear equation, fractional p-Laplacian, mixed local-nonlocal operator, convection term, global existence, extinction in finite time, blow-up in finite time. 

\emph{MSC.} 35B30, 35B44, 35D30, 35K61, 35Q86

\section{Introduction}

%Let $q\in (1,\infty),$ $p \in (2,\infty)$ and $%\mu   \geq 0$ we define 
%$$\A u=-\Delta_p u +\mu \qfrac u$$

%where $\Delta_p u=\Div(|\nabla u|^{p-2}\nabla u)$ and the fractional $q$-Laplacian is defined on the Sobolev space $\Wsqz$ up to a normalizing constant by: 

%$$\qfrac u(x)= 2 P.V. \int_\Rn \frac{|u(x)-u(y)|^{q-2}(u(x)-u(y))}{|x-y|^{d+sq}}dy.$$

In this paper, we study the problem:
\begin{equation}\label{pbp}\tag{P}
    \begin{cases}
     \partial_t \beta(u)+ \A  u= \Div (\overset{\to}{f}(u))+g(t,x,u) \quad \text{in} \;Q_T:=(0,T)\times \Omega,\\
      u=0 \quad \text{in} \;  (0,T)\times (\Rn\backslash \Omega),\\
      u(0,\cdot)=u_0(\cdot) \text{ in } \Omega,
    \end{cases}\
\end{equation}
where $\Omega\subset \Rn$ is a bounded domain with smooth boundary. Here, for $q\in (1,\infty)$, $p \in (2,\infty)$ and $\mu  \geq 0$ we define the mixed local-nonlocal nonlinear operator 
$$
\A u=-\Delta_p u +\mu \qfrac u,
$$
where $\Delta_p u=\Div(|\nabla u|^{p-2}\nabla u)$ is the $p$-Laplacian and $\qfrac u$ denotes the fractional $q$-Laplacian. The latter is defined on the Sobolev space $\Wsqz$ up to a normalizing constant by: 
$$
\qfrac u(x)= 2 P.V. \int_\Rn \frac{|u(x)-u(y)|^{q-2}(u(x)-u(y))}{|x-y|^{d+sq}}dy.
$$

%ADD REFS FOR p-L and f q-L

%ADD on motivation for the parabolic problwm with driving term...

We shall impose different growth conditions on the nonlinearities $\beta$, $\ff$ and $g$, see Section \ref{secmain} for more details. We point out that, in any case, we take $\beta$ such that $\beta(0)=0$ and increasing, and $\ff(0)=0$, so that no hyperbolic regions are created and the boundary conditions are well suited (see \cite{alibaud} for the case $q=2$ and no $p$-Laplacian). 

In this paper we prove the existence and uniqueness of a weak and mild solution of the problem, in the sense that we explain below in Definition~\ref{wmildsol}. We then show results on the qualitative behavior of the solution for both linear as well as power type source terms.

The study of equations involving nonlocal operators has recently attracted considerable interest due to their role in physical phenomena with long-range interactions, prompting the creation of a specialized mathematical theory. These equations, encompassing elliptic and parabolic problems, emerge in fields like finance, Lévy type stochastic processes, physics, populations dynamics, and fluid dynamics. For instance models using nonlocal operators overcome some limitations of continuum mechanics, not being well suited for multi-phase materials or fracturing (see e.g. \cite{drapaca,silling,zimmermann}). 

In such models the presence of external forcing convection terms (due to, for example, injection of fluids at non negligible rates or gravity effects) might be considered. Mathematically, this term alone might not be supported unless a more regular term, such a diffusion term, is present. In this regard, to ensure the well-posedness of the general model where the diffusion is given by the fractional $q$-Laplacian as above, we have included the $p$-Laplacian term, thus keeping solutions well-defined.  

On the other hand, mixed local and nonlocal operators and, in particular, a linear combination of a $p$-Laplacian and a fractional $q$-Laplacian has seen an increase in mathematical interest too. For example, some results on the existence, uniqueness and regularity of solutions of an elliptic problem have been obtained in \cite{antonini, opmixte, filippis, garain}). One can also find some results on the semilinear parabolic problem with such a mixed operator in \cite{biagi}.

The particular case $\ff=0$, is the case of a degenerate doubly parabolic equation, and has been studied separately for the fractional $p$-Laplacian as well as for the classical $p$-Laplacian (see e.g. \cite{CollierHauer, constantin2024existence,Giacomoni,kato,uchida}). One can also find some results on the fractional $p$-Laplacian evolution equation, that is the problem involving the nonlocal operator with one nonlinearity. For example it has been solve for the homogeneous case (e.g. \cite{mazon,vazquez}) as well as for the nonhomogeneous case (e.g. \cite{abdellaoui,talibi}).

On the other hand, the case $\beta=Id$ and with $\mu=0$ or with, formally, $s=1$, gives a local convection diffusion equation. The associated elliptic equation has been studied in e.g. \cite{faraci,faria, motreanu}. For the parabolic convection diffusion equation we can see \cite{ji, jin,nakao} where, like in this article, the authors take a convection term of the form $b(u).\nabla u$ in unbounded domains. 

The local doubly degenerate equation without a source term but with a convection term, has been studied, for example in \cite{ShangCheng}, where the authors study this doubly degenerate parabolic equation for a gradient source term of the form $|\nabla u^q |^\sigma$ on the whole space to find existence and non-existence results of weak solutions. 

One can also see \cite{laptev} where the author finds weak solutions of a second-order (thus local, $\mu=0$) quasilinear parabolic equation with a double nonlinearity and a source term of the form $A_0(t,x,u,Du)$. The main difference with our existence result are the conditions we set on the nonlinear terms $A_0$ and $\beta$ as well as the methods used.

In this article we treat the questions of existence, uniqueness and qualitative behavior such as extinction, blow up and stabilization of solutions of the problem \eqref{pbp}. In this regard, we generalize the results of \cite{constantin2024existence}. As in \cite{constantin2024existence} we will use the accretive operator theory combined with a discretization in time scheme to prove existence and uniqueness of a combined weak and mild solution, that we call {\it weak-mild} solution.

More precisely, for a nonlinear source term, we obtain, by a discretization in time method, local and global in time existence of weak solutions (that is solutions satisfying a variational formulation), as well as an $L^1$-contraction property. By combining this contraction property, given by the notion of mild solution, and Gronwall's Lemma, we get uniqueness under some local Lipschitz condition on the source term (see Section \ref{sec2}).
For this approach we first need to study the associated elliptic problem and prove accretivity of the operator $A$, given by $Av=\A  (\beta^{-1}(v))-\Div (\overset{\to}{f}(\beta^{-1}(v)))$ (defined in Section \ref{sec0}), 
as well as some density of its domain $D(A)$ (see Section \ref{acc}).
We finish by showing qualitative behavior of these solutions (see Section \ref{sec3}). More precisely, using a comparison principle as well as a subsolution method we show a stabilization result, or convergence in time to the stationary solution for $\mu>0$ and for a linear source term. Finally, using energy methods we show extinction and blow up for a power type source term. We notice  that up to our knowledge, even in the specific case $\mu=0,\beta=Id$, that is the case of a parabolic equation involving the $p$-Laplacian and a convection term, no energy method has been used to prove blow up in a bounded domain. 

In the next section we give some preliminary results that are needed in our analysis. In Section~\ref{secmain} we list precisely our results after giving the conditions on the nonlinearities. The proofs of these results follow in the next sections, as we outline also in Section~\ref{secmain}. To make the reading more fluent, some auxiliary technical results are given in the Appendix. 

\subsection{Preliminaries}\label{sec0}

In this section we set the functional setting and define the notion of solution that we will use later. We start with the functional setting, for additional information we refer for instance to \cite{rando,bisci}. 

We start by recalling the definition of the fractional Sobolev space $W^{s,q}(R^d)$:

$$
W^{s,q}(\Rn)=\bigg\{ u\in L^q(\Rn) \; | \; \int_\Rn \int_\Rn  \frac{|u(x)-u(y)|^q}{|x-y|^{d+sq}}dy dx<\infty \bigg\},
$$

endowed with the natural norm:

$$
\|u\|_{W^{s,q}(\Rn)}=\bigg (\int_\Rn |u|^qdx +  \int_\Rn \int_\Rn  \frac{|u(x)-u(y)|^q}{|x-y|^{d+sq}}dy dx \bigg)^\frac{1}{q}.
$$

The space $\Wsqz$ is defined by $\Wsqz= \big \{ u\in W^{s,q}(\Rn) \; | \; u=0 \text{ on } \Rn \backslash \Omega \big \},$ endowed with the Banach norm

$$
\|u\|_\Wsqz = \left( \int_\Rn \int_\Rn  \frac{|u(x)-u(y)|^q}{|x-y|^{d+sq}}dy dx\right)^{\frac1q}.  
$$

The space $\Wsqz$ is a reflexive space and a Poincar\'e inequality (see {\it e.g.} \cite[Th. 6.5]{bisci}) provides the equivalence of the two norms $\|\cdot \|_{W^{s,q}(\Rn)}$ and $\|\cdot\|_\Wsqz$ on this space.

For $sq<d$ and $\gamma\in [1,\frac{dq}{d-sq}]$, we have the continuous embedding 
$\Wsqz\hookrightarrow L^\gamma(\Omega)$ and if $\gamma<\frac{dq}{d-sq}$ the embedding is compact (see {\it e.g.} \cite[Coro. 7.2]{rando}). 

For $sq\geq d$, we have the compact embedding $\Wsqz\hookrightarrow L^ \gamma(\Omega)$ for all $\gamma\in [1,+\infty)$ (see {\it e.g.} \cite{rando}).

We denote by $\Wpz$ the classical Sobolev space and recall that we have similar continuous and compact embedding results for this space (see e.g. \cite{brezis}).

%\vspace{0.3cm}

By definition of the fractional $q$-Laplacian and its symmetry, we have that for any $u$, $v \in \Wsqz$ (rearranging the integral and applying Fubini):
$$
\langle\qfrac u,v\rangle=\int_\Rn \int_\Rn \frac{|u(x)-u(y)|^{q-2}(u(x)-u(y))(v(x)-v(y))}{|x-y|^{d+sq}}dx dy.
$$
We define for $u,v\in \Wpz\cap \Wsqz$:
$$
\langle \A u ,v \rangle =\int _\Omega |\nabla u|^{p-2}\nabla u. \nabla v+ \mu \langle\qfrac u,v\rangle .
$$
Here and in what follows $\langle\cdot, \cdot \rangle$ denotes the dual product, where we shall not specify the dual space it corresponds to, since it will be clear from the context.

We recall that $\A$ is the Gateau-differential of the convex functional:

$$
J_\A (v):= \frac{1}{p}\|\nabla v\|^p_p +\frac{\mu}{q}\|v\|^q_\Wsqz.
$$

For simplicity of notation, we set 
$$\W:= 
\begin{cases}
   \Wpz \cap \Wsqz, &  \text{ if } \mu >0,  \\
    \Wpz, & \text{ if } \mu=0,
\end{cases}
%\W:= \Wpz \cap \Wsqz  \text{ if } \mu >0 
%\text{ and } \W :=\Wpz \text{ if } \mu=0,
$$ 
and denote its dual space by $\W^*$. The intersection space $\Wpz \cap \Wsqz$ is endowed with the norm $\| \cdot\|_{\Wpz}+\|\cdot\|_\Wsqz$. We then notice that if $J_\A (v)<\infty$ then $ \|v\|_\W<\infty$. 

We also define the space:
$$
X_T:=\{ v\in  L^\infty(Q_T)\cap L^\infty(0,T;\W)\; | \; \partial_t  v \in L^2(Q_T) \}.
$$

We shall show the existence of weak solutions in the following sense:
\begin{definition}[Weak solution]\label{defsol2}
A function $u \in X_T$ is a weak solution of \eqref{pbp} if $\beta(u)\in C([0,T];L^{2}(\Omega))$, $u(0,\cdot)=u_0$ {\it a.e.} in $\Omega$ and such that for any $t\in [0, T]$:
\begin{equation}\label{fv2}
\begin{split}
\bigg[\int_\Omega \beta(u) \varphi\,dx\bigg]^t_0-\int_0^t\int_\Omega \beta(u) \partial_t \varphi \,dxd\tau+&\int_0^t \left(\langle\A u,\varphi\rangle -\langle \Div \ff (u),\varphi\rangle  \right)\,d\tau\\
&=\int_0^t\int_{\Omega}g(\tau,x,u)\varphi\,dxd\tau,
\end{split}
\end{equation}
for any $\varphi \in H^1(0,T;L^2(\Omega))\cap L^1(0,T;\W)$. 
\end{definition}

We also define the notion of mild solutions of \eqref{pbp} as in \cite{barbu2}. This is done first for the following problem, for which, formally, $v=\beta(u)$:

\begin{equation*}\label{pbo}\tag{$P_0$}
    \begin{cases}
     \partial_t v + A({v}) = h &  \text{in} \;Q_T,\\
      v=0 & \text{in} \;(0,T) \times (\Rn\backslash \Omega), \\
      v(0,\cdot)=v_0&  \text{in} \; \Omega,
    \end{cases}\
\end{equation*} 
where $h \in L^1(\Omega)$ and the operator $A : L^1(\Omega) \to L^1(\Omega)$ is thus defined by: 
\begin{equation}\label{defA}
A (v) = \A(\beta^{-1}(v))-\Div \ff (\beta^{-1}(v)),
\end{equation}
on the domain:
\begin{equation}\label{domA}
D(A)=\{v\in L^1(\Omega) \; | \;  \beta^{-1}(v)\in \W, \; A(v) \in L^1(\Omega)\}.
\end{equation}

We next define mild solutions of problem \eqref{pbo} as in \cite[Def. 4.3.]{barbu2}, and then mild solutions of problem \eqref{pbp}. For completeness, we recall the definition of $\varepsilon$-approximate solutions in Definition \ref{defapp} of the Appendix.
\begin{definition}[Mild solutions]\label{mildsol}
For $T>0$ and $h\in L^1(Q_T)$, we say that a mild solution of the problem \eqref{pbo} is a function $v\in  C([0,T];L^1(\Omega))$ such that for any $\varepsilon>0$, there is an $\varepsilon$-approximate solution $U$ that satisfies $\|U-u\|_{L^1(\Omega)}<\varepsilon$ for all $t\in[0,T]$.

We say that $u$ is a mild solution of problem \eqref{pbp} if $v=\beta(u)$ is a mild solution of \eqref{pbo} for $h=g(t,x,\beta^{-1}(v))$.
\end{definition}

We will search for solutions that are weak and mild solutions. Namely, as in \cite{constantin2024existence}, we define the notion of weak-mild solution: 
\begin{definition}[Weak-mild solutions]\label{wmildsol}
Let $T>0$, a weak-mild solution $u$ of \eqref{pbp} is a weak solution $u$ such that it is also a mild solution of \eqref{pbp}.

We call global weak-mild solution to a function $u\in L^\infty_{loc}(0,\infty;L^\infty(\Omega))$ such that $u$ is a weak-mild solution of \eqref{pbp} on $Q_{ T}$ for any $ T<\infty$.
\end{definition}

Throughout this manuscript, we use the following conventions. First, for a given a real valued function $f$, we use the notation 
\[
f_+=\max\{0,f\} \text{ and } f_-=\min\{0,f\},
\]
so that $f=f_+ + f_-$ and $|f|=f_+ -f_-$. In general, we use the letters $u$ and $v$ to denote solutions to problem \eqref{pbp} and to \eqref{pbo}, respectively. But for simplicity in notation (e.g. in order to avoid indexes), we might use both when comparing two solutions of either \eqref{pbp} or \eqref{pbo}.

\subsection{Main results}\label{secmain}
Before giving the main results, we set the conditions on the nonlinearites and other parameters that we assume throughout this paper. 

We take $u_0\in \W\cap L^\infty(\Omega)$. We set $\overset{\to}{f}=(f_1, \ldots ,f_d)$ such that $f_i\in W^{1,\infty}_{loc}(\mathbb{R})$ and $f_i(0)=0$. We take $\beta: \mathbb{R} \to \mathbb{R}$ increasing bijective and locally $\alpha$-Hölder continuous for $\alpha\in (0,1]$, with $\beta(0)=0$ and such that:
\begin{align*}
&\bullet \forall K>0,\;\beta(t)-\beta(t') \geq C_K(t-t') \text{ for } -K\leq t'\leq t\leq K, \label{beta1}\tag{$\beta_1$} \\
&\bullet |\beta(t)| \leq c(1+|t|^{p-1}). \label{beta2}\tag{$\beta_2$}
\end{align*}
For example, we can take $\beta(t)=|t|^{\frac{1}{m}-1}t$ for $m\geq 1$. 

We take $g$ to be a Carath\'eodory function. We shall set the condition, for given constants $c_g$, $q_g\in (0,\infty)$:
\begin{equation}\label{growcond}\tag{g1}
|g(t,x,s)|\leq c_g(1+|s|^{q_g}) \text{ a.e. in } Q_T.
\end{equation}

We are now ready to state the existence results.

\begin{theorem}\label{para1}
Let $\beta$ be an odd function that satisfies \eqref{beta1} and \eqref{beta2}. Then, we have:\\

If \eqref{growcond} is satisfied, then there exists $T>0,$ such that there exists $u$ a weak-mild solution of \eqref{pbp}.\\

Or if, instead, the following condition on $g$ is satisfied,

\begin{equation}\label{g2}\tag{g2}
\lim_{R\to \infty} \frac{\sup_{[0,R]}g}{\beta(R)}=C_g,
\end{equation}
then, there exists a global weak-mild solution.

\end{theorem}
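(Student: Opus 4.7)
The plan is to follow the strategy outlined in the introduction: combine an implicit Euler time discretization with the accretive operator theory for the operator $A$ defined in \eqref{defA}. Partitioning $[0,T]$ into $N$ equal subintervals of length $\dt = T/N$ with nodes $t_n = n\dt$, one constructs a sequence $(u_n)_{0\le n\le N}$ by setting $u_0$ equal to the initial datum and, for $n\ge 1$, solving the stationary problem
\begin{equation*}
\beta(u_n) + \dt\, \A u_n - \dt\,\Div\ff(u_n) = \beta(u_{n-1}) + \dt\, g(t_n,\cdot,u_{n-1}) \quad\text{in } \Omega,
\end{equation*}
together with the exterior condition $u_n = 0$ on $\Rn\setminus\Omega$. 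Handling the source term explicitly avoids any monotonicity assumption on $g$, and solvability at each step, together with $u_n \in \W \cap L^\infty(\Omega)$, follows from the accretivity of $A$ on $L^1(\Omega)$ established in Section \ref{acc} and from a direct minimization of the strictly convex functional $v\mapsto \frac{1}{2}\|\beta(v)\|_2^2+ \dt\, J_\A(v) - \langle F_n, v\rangle$ where $F_n$ collects the data at the previous step.

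The next step is to obtain an $L^\infty$ bound on the $u_n$, and this is where the dichotomy between \eqref{growcond} and \eqref{g2} enters. Testing the scheme with $(u_n-M_n)_+$ and using that $\ff(0)=0$ together with the standard sign properties of $\A$ (both the $p$-Laplacian and the fractional $q$-Laplacian) on truncations, one derives
\begin{equation*}
\beta(M_n) \le \beta(M_{n-1}) + \dt\, \sup_{|s|\le M_{n-1}} |g(t_n,\cdot,s)|.
\end{equation*}
Under \eqref{growcond} this reduces to a discrete Bernoulli inequality for $\beta(M_n)$ which, thanks to \eqref{beta2} and \eqref{beta1}, is bounded on a maximal interval $[0,T^\star)$; choosing $T<T^\star$ yields the local-in-time $L^\infty$ estimate, hence the first assertion. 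Under \eqref{g2}, for any $\varepsilon>0$ one can pick $M$ large enough that $\sup_{|s|\le M} g(t,\cdot,s) < \beta(M)/\dt_0$ uniformly in $t$ for every fixed $\dt\le \dt_0$, so the constant sequence $M_n\equiv \max(M, \|u_0\|_\infty)$ is a discrete supersolution, giving a $T$-independent bound. The lower bound follows symmetrically using that $\beta$ is odd.

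Energy estimates in $\W$ come from testing with $u_n$ itself: the convex entropy inequality $(\beta(a)-\beta(b))a \ge B(a)-B(b)$, where $B$ is the antiderivative of $\beta$, yields after telescoping
\begin{equation*}
\sum_{n=1}^N \dt\, J_\A(u_n) \le C\bigl(\|u_0\|_\W, T, \sup_n\|u_n\|_\infty\bigr),
\end{equation*}
while testing with $(\beta(u_n)-\beta(u_{n-1}))/\dt$ and using \eqref{beta1} provides $L^2(Q_T)$ control on the discrete time-derivative of $\beta(U^\dt)$, where $U^\dt$ denotes the piecewise-constant interpolant of $(u_n)$. These estimates let one verify that $V^\dt := \beta(U^\dt)$ is an $\varepsilon$-approximate solution of \eqref{pbo} in the sense of Definition \ref{defapp} as $\dt\to 0$, so Crandall–Liggett-type theory yields $V^\dt \to v$ in $C([0,T];L^1(\Omega))$ with $v$ a mild solution of \eqref{pbo}. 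An Aubin–Lions argument together with \eqref{beta1} upgrades this to strong $L^2(Q_T)$ convergence of $U^\dt \to u := \beta^{-1}(v)$ and weak-$\star$ convergence in $L^\infty(0,T;\W)$.

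The main technical obstacle will be passing to the limit in the nonlinear operators $\A u_n$ and $\Div \ff(u_n)$, neither of which is weakly continuous. The convection term is the easier of the two: $\ff \in W^{1,\infty}_{loc}(\mathbb{R})^d$ combined with the uniform $L^\infty$ bound gives strong $L^r(Q_T)$ convergence of $\ff(U^\dt)$ for every finite $r$, so passage to the limit in $\int \ff(u)\cdot \nabla\varphi$ is immediate. For the monotone but nonlinear $\A$, I would invoke a Minty-type argument: using strong $L^2$ convergence of $U^\dt$, one shows $\limsup_{\dt\to 0}\int_0^T\langle \A U^\dt, U^\dt\rangle\, d\tau \le \int_0^T\langle \chi, u\rangle \, d\tau$ where $\chi$ is the weak-$\W^*$ limit of $\A U^\dt$, and monotonicity of $\A$ (hemicontinuity of $J_\A$) identifies $\chi = \A u$. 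Completing the weak formulation \eqref{fv2} then concludes the argument, which under \eqref{growcond} produces a weak-mild solution on a short interval while under \eqref{g2} it extends to every $T<\infty$ and yields the global weak-mild solution of Definition \ref{wmildsol}.
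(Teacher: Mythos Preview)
Your overall architecture---implicit Euler discretization, $L^\infty$ control by comparison, energy estimates, compactness plus a Minty-type identification for $\A$, and recognition of the discrete iterates as an $\varepsilon$-approximate solution for the mild property---is exactly the one the paper uses. Two of your intermediate steps, however, do not work as written.

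\textbf{Solvability of the discrete step.} The stationary problem at level $n$ contains the convection term $\Div\ff(u_n)$, which is not the Euler--Lagrange term of any functional of $u_n$; moreover the derivative of $\tfrac12\|\beta(v)\|_2^2$ is $\beta(v)\beta'(v)$, not $\beta(v)$. So neither accretivity alone nor the minimization you propose yields a solution. The paper handles this by freezing the convection: for fixed $w\in\Wpz$ one minimizes $J_w(u)=\int_\Omega B(u)+\lambda J_\A(u)-\langle h,u\rangle+\lambda\int_\Omega\nabla u\cdot\ff(w)$ (with $B'=\beta$), and then applies Schaefer's fixed-point theorem to the map $w\mapsto u$ (Theorem~\ref{exielli}). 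The $L^\infty$ bound is then obtained by first truncating $\ff$ outside $[-R,R]$ so that $\ff\in W^{1,\infty}(\mathbb R)^d$ (Lemma~\ref{elli2}).

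\textbf{Energy estimates.} Testing with $u_n$ and telescoping via the entropy inequality gives only $\sum_n \dt\,\langle\A u_n,u_n\rangle\le C$, i.e.\ an $L^1$-in-time bound on $J_\A(u_\dt)$; this falls short of the $L^\infty(0,T;\W)$ membership that the space $X_T$ demands. Your second test function $(\beta(u_n)-\beta(u_{n-1}))/\dt$ is not admissible in general: since $\beta$ is only locally $\alpha$-H\"older (think $\beta(t)=|t|^{1/m-1}t$, $m>1$), $\beta(u_n)$ need not belong to $\W$, and even when it does the cross term $\langle\A u_n,\beta(u_n)-\beta(u_{n-1})\rangle$ does not telescope because $J_\A\circ\beta^{-1}$ is not convex. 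The paper's device is to test with $u^n-u^{n-1}$: convexity of $J_\A$ gives $\langle\A u^n,u^n-u^{n-1}\rangle\ge J_\A(u^n)-J_\A(u^{n-1})$, while \eqref{beta1} combined with the uniform $L^\infty$ bound gives $\int_\Omega(\beta(u^n)-\beta(u^{n-1}))(u^n-u^{n-1})\ge C\|u^n-u^{n-1}\|_2^2$. A single summation then yields both $\sup_n J_\A(u^n)\le C$ and $\|\partial_t\tilde u_\dt\|_{L^2(Q_T)}\le C$, which is precisely what is needed for $u\in X_T$ and for the compactness/Minty step (the paper's Proposition~\ref{convlap}).
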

Imposing more restrictions on $\ff$ and $g$, we have the following Theorem, that gives global existence of bounded solutions.
\begin{theorem}\label{para2}
Let $g(x,t,s)=g(s)$ be Lipschitz function with $g(0)=0$. Assume that $u_0\in [0,k]$ a.e. and $f_i\in W^{1,\infty}([0,k])$ with $f_i=0$ on $\mathbb{R}\backslash ]0,k[$ for some $k>0$ fixed.

Then, there exists $u\in [0,k]$ a.e. that is a global weak-mild solution of \eqref{pbp}.
\end{theorem}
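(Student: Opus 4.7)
The plan is to construct $u$ as the limit of a penalization scheme built on top of Theorem~\ref{para1}. For each integer $n\ge 1$, set
\[
\tilde g_n(s):=\begin{cases} g(s) & s\in[0,k],\\ g(k)-n(s-k) & s>k,\\ -n s & s<0.\end{cases}
\]
Each $\tilde g_n$ is globally Lipschitz with $\tilde g_n(0)=0$, and is bounded above on $[0,R]$ by a constant independent of $R$, so \eqref{g2} holds with $C_g=0$. The global existence part of Theorem~\ref{para1} then furnishes a global weak-mild solution $u_n$ of \eqref{pbp} with $\tilde g_n$ in place of $g$ (keeping $\ff$, $\beta$ and $u_0$ unchanged). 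The idea is that the steep linear retraction outside $[0,k]$ will force the sequence $u_n$ into $[0,k]$ as $n\to\infty$, while $\tilde g_n\equiv g$ on $[0,k]$ ensures the limit solves the original problem.

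\textbf{Confinement estimate.} Test the equation for $u_n$ against $(u_n-k)_+\in\W$. The $p$-Laplacian contribution $\int_{\{u_n>k\}}|\nabla u_n|^p$ and the nonlocal contribution $\langle\qfrac u_n,(u_n-k)_+\rangle$ are non-negative (the latter by monotonicity of $t\mapsto(t-k)_+$). The convection term vanishes because integration by parts turns it into $\int\ff(u_n)\cdot\nabla(u_n-k)_+$ while $\ff=0$ outside $(0,k)$. With the non-negative entropy $H_k(s):=\int_0^s(r-k)_+\,d\beta(r)$, the chain rule yields $\int\partial_t\beta(u_n)(u_n-k)_+\,dx=\frac{d}{dt}\int H_k(u_n)\,dx$, and $H_k(u_0)=0$ since $u_0\in[0,k]$. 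On $\{u_n>k\}$ one has $\tilde g_n(u_n)=g(k)-n(u_n-k)$, so Young's inequality produces
\[
\int_\Omega H_k(u_n(t))\,dx+\frac{n}{2}\int_0^t\!\!\int_\Omega((u_n-k)_+)^2\,dxd\tau\le\frac{g(k)^2|\Omega|\,t}{2n},
\]
hence $\|(u_n-k)_+\|_{L^2(Q_T)}=O(1/n)$. The symmetric test against $(u_n)_-$ (using $\ff=0$ on $\{u_n<0\}$ and $\tilde g_n(s)s=-n s^2\le 0$ there) in fact yields $(u_n)_-=0$ a.e.\ for every $n$, so $u_n\ge 0$ pointwise.

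\textbf{Passage to the limit.} The standard energy estimate (multiplying by $u_n$) together with the $L^1$-contraction property inherited from mildness gives uniform bounds on $u_n$ in $L^\infty(0,T;\W)\cap L^\infty(Q_T)$ and on $\partial_t\beta(u_n)$ in $L^2(Q_T)$, so an Aubin--Lions argument extracts a subsequence converging strongly in $L^p(Q_T)$ and a.e.\ to some $u\in[0,k]$ by the previous step. Because $\tilde g_n\equiv g$ on $[0,k]$, dominated convergence gives $\tilde g_n(u_n)\to g(u)$ in $L^2$, and one passes to the limit in the variational formulation to obtain a weak solution; mildness is preserved since the $L^1$-contraction structure inherited from the discretization scheme of Theorem~\ref{para1} is stable under $L^1$-limits. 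The main technical obstacle is identifying the weak limits of $-\Delta_p u_n$ and $\qfrac u_n$, which I would handle by a Minty-type monotonicity argument combined with the convergence of the associated energies, exactly as in the proof of Theorem~\ref{para1}.
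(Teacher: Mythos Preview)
Your penalization scheme is a reasonable alternative to the paper's approach, but it differs in a structural way and has two gaps worth flagging.

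\textbf{Comparison.} The paper does not invoke Theorem~\ref{para1} at all. Instead it first proves an elliptic confinement lemma (Theorem~\ref{elli}): for $g$ Lipschitz vanishing outside $(0,k)$, the time-discretized problem produces iterates $u^n\in[0,k]$ directly, because one can test with $(u-k)_+$ and $u_-$ at the \emph{elliptic} level. This gives the $L^\infty$ bound for free before any limit is taken. In a second step the paper approximates a general Lipschitz $g$ with $g(0)=0$ by $g_n$ that agree with $g$ on $[0,k]$ and vanish on $(-\infty,0]\cup[k+\tfrac1n,\infty)$, applies Step~1 to get $u_n\in[0,k+\tfrac1n]$, and passes to the limit. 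Your route instead keeps $g$ and adds a steep retraction, producing confinement only asymptotically through an $L^2$ estimate on $(u_n-k)_+$.

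\textbf{Gap 1: oddness of $\beta$.} Theorem~\ref{para1} explicitly assumes $\beta$ odd (this is used in the barrier argument of Lemma~\ref{elli2} to get the two-sided bound $|\beta(u)|\le\dt\|h\|_\infty$). The paper stresses that one point of Theorem~\ref{para2} is precisely to drop that hypothesis. By building on Theorem~\ref{para1} you silently reintroduce it, so your argument does not prove the statement in the generality claimed.

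\textbf{Gap 2: uniform bounds.} You assert that ``the $L^1$-contraction property inherited from mildness gives uniform bounds on $u_n$ in $L^\infty(0,T;\W)\cap L^\infty(Q_T)$''. The $L^1$-contraction gives nothing of the sort. The $L^\infty(Q_T)$ bound produced inside Theorem~\ref{para1} comes from the iteration $\|\beta(u^n)\|_\infty\le\|\beta(u^{n-1})\|_\infty+\dt\|g^n\|_\infty$, and $\|g^n\|_\infty$ for your $\tilde g_n$ may be of order $n$ unless you already know $u^{n-1}\in[0,k]$. You would need to argue separately (e.g.\ by showing that the $R$ chosen in Step~4 of Theorem~\ref{para1} under \eqref{g2} can be taken independent of $n$ because $\sup_{[0,R]}\tilde g_n$ is uniformly bounded) to get a uniform $L^\infty$ bound, and this in turn still leans on the one-sided nature of \eqref{g2} together with oddness of $\beta$. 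The paper avoids this entirely by establishing $u^n\in[0,k+\tfrac1n]$ at the discrete level.
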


In order to get uniqueness we add the following condition on $g$. 

For any $T$, $R>0$, there exists a positive constant $C(T,R)$ such that, for all $(t,x)\in Q_T$, and all $s_1$, $s_2\in B(0,R)$,
\begin{equation}\label{gLip}\tag{g3}
|g(t,x,s_1)-g(t,x,s_2)\leq C(T,R)|\beta (s_1)- \beta (s_2)|.
\end{equation}
Then, we can prove the following:
\begin{theorem}[Uniqueness]\label{prop:uniqueness}
Let $T>0$, if $g$ satisfies \eqref{gLip}, there is a unique weak-mild solution of \eqref{pbp}.

Moreover, if $u$ and $v$ are two weak-mild solutions of \eqref{pbp} with right-hand side $g_1$ and $g_2$, respectively, and initial data $u_0$ and $v_0\in { \W\cap L^{\infty}(\Omega)}$. Then, the following $L^1$-contraction property holds:
\begin{equation}\label{L1contr}
\sup_{[0,T]}\|\beta(u)-\beta(v)\|_{L^1(\Omega)}\leq \|\beta(u_0)-\beta(v_0)\|_{L^1(\Omega)}+\int_0^T\|g_1(t,x,u)-g_2(t,x,v)\|_{L^1(\Omega)}\,dt.
\end{equation}
\end{theorem}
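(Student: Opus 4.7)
The plan is to derive the $L^1$-contraction property first, and then deduce uniqueness from it together with a Gronwall argument using \eqref{gLip}.

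First, I would exploit the fact that both $u$ and $v$ are in particular mild solutions of \eqref{pbp}, so that $V:=\beta(u)$ and $\tilde V:=\beta(v)$ are mild solutions of \eqref{pbo} with right-hand sides $h_1(t,x):=g_1(t,x,\beta^{-1}(V))$ and $h_2(t,x):=g_2(t,x,\beta^{-1}(\tilde V))$, and initial data $\beta(u_0)$, $\beta(v_0)\in L^1(\Omega)$. By the accretivity of $A$ in $L^1(\Omega)$ established in Section \ref{acc}, the abstract contraction theorem for mild solutions of evolution equations governed by accretive operators (Bénilan–Crandall, see the standard machinery underlying \cite{barbu2}) gives, for every $t\in[0,T]$,
\begin{equation*}
\|V(t)-\tilde V(t)\|_{L^1(\Omega)}\leq \|V(0)-\tilde V(0)\|_{L^1(\Omega)}+\int_0^t\|h_1(\tau)-h_2(\tau)\|_{L^1(\Omega)}\,d\tau.
\end{equation*}
Rewriting this in the original variables yields exactly \eqref{L1contr}. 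At the level of the proof, I would recall how this estimate emerges by accretivity from the discrete scheme $\frac{V^{n+1}-V^n}{\Delta t}+A(V^{n+1})=h^{n+1}$ that defines the $\varepsilon$-approximate solutions of Definition \ref{defapp}: applying accretivity to two such schemes gives $\|V^{n+1}-\tilde V^{n+1}\|_1\leq \|V^n-\tilde V^n\|_1+\Delta t\,\|h^{n+1}-\tilde h^{n+1}\|_1$, and iterating plus passing to the limit $\varepsilon\to 0$ in Definition \ref{mildsol} gives the continuous contraction.

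For the uniqueness statement, I would apply \eqref{L1contr} to two weak-mild solutions $u$, $v$ sharing the same initial datum $u_0$ and the same source $g$. Since weak-mild solutions are in $X_T\subset L^\infty(Q_T)$, there is $R>0$ with $\|u\|_{L^\infty(Q_T)}$, $\|v\|_{L^\infty(Q_T)}\leq R$, so the Lipschitz hypothesis \eqref{gLip} applies pointwise and gives
\begin{equation*}
\|g(\tau,\cdot,u)-g(\tau,\cdot,v)\|_{L^1(\Omega)}\leq C(T,R)\,\|\beta(u(\tau))-\beta(v(\tau))\|_{L^1(\Omega)}.
\end{equation*}
Plugging this into \eqref{L1contr} with $u_0=v_0$ yields
\begin{equation*}
\|\beta(u(t))-\beta(v(t))\|_{L^1(\Omega)}\leq C(T,R)\int_0^t \|\beta(u(\tau))-\beta(v(\tau))\|_{L^1(\Omega)}\,d\tau,
\end{equation*}
and Gronwall's lemma forces $\beta(u)=\beta(v)$ a.e. in $Q_T$. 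Since $\beta$ is increasing and bijective, $u=v$ a.e.

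The main obstacle is justifying the abstract $L^1$-contraction \eqref{L1contr} rigorously in our setting: strictly speaking, mild solutions of \eqref{pbo} satisfy such an inequality when the source $h$ is fixed, whereas here $h_1$, $h_2$ depend on the unknown through $\beta^{-1}(V)$, $\beta^{-1}(\tilde V)$. I would handle this by freezing the nonlinear sources, viewing $V$ and $\tilde V$ as mild solutions of \eqref{pbo} with the (now given) $L^1$ right-hand sides $h_i(t,x)=g_i(t,x,\beta^{-1}(V))$ and $g_i(t,x,\beta^{-1}(\tilde V))$, which are legitimate data in $L^1(Q_T)$ by the $L^\infty$-bound of weak-mild solutions and the Carathéodory/growth assumptions on $g$. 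Once that reduction is made, the contraction for fixed data is a direct application of the results recalled from \cite{barbu2}, and the Gronwall argument above closes the proof.
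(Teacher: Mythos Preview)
Your proposal is correct and follows essentially the same approach as the paper: first establish the $L^1$-contraction \eqref{L1contr} for fixed right-hand sides via the discrete accretivity inequality and a limit passage, then freeze the nonlinear sources $h_i=g_i(\cdot,\cdot,u)$, $g_i(\cdot,\cdot,v)$ (which lie in $L^\infty(Q_T)$ by the $X_T$ bound) to deduce \eqref{L1contr} in full, and finally combine \eqref{gLip} with Gronwall's lemma for uniqueness. The paper justifies the limit passage by invoking Remark~\ref{rk} (uniqueness of the mild solution for fixed $h$, hence weak-mild solutions are limits of the discretized ones from the proof of Theorem~\ref{para1}), whereas you appeal to the abstract B\'enilan--Crandall contraction for mild solutions; these are two phrasings of the same mechanism.
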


\begin{remark}
In relation to the conditions on $g$, we can think of power law type functions. 
For example, if we take for $m>1$, $\beta(s)=|s|^{\frac{1}{m}-1}s$ and $g(u)=|u|^{r-1}u$, then we have that the condition $\eqref{g2}$ is equivalent to $\frac{1}{m}\geq r$, and the condition \eqref{gLip} is equivalent to $\frac{1}{m}\leq r$.
\end{remark}

We end this section by stating the qualitative behavior of our solutions. We start with the convergence to the steady state.

\begin{theorem}[Stabilization]\label{thstab}
Let $\mu>0$, $g=h\in L^\infty(\Omega)$ be a given nonnegative function. We denote by $u_{stat}$ the unique nonnegative solution of the stationary problem \eqref{pbstab}. If $u_0\in[0,u_{stat}]$ then, as $t\to \infty$, $u(t)\to u_{stat}$ in $L^\gamma(\Omega)$ for all $\gamma<\infty$, where $u$ is the global solution of Theorem \ref{para1}.
\end{theorem}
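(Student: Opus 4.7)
The plan is to sandwich $u(t)$ between the stationary solution $u_{stat}$ acting as a supersolution from above and a monotone increasing family of subsolutions that also converges to $u_{stat}$ from below. The first ingredient I need is a comparison principle: if $u_1$, $u_2$ are two weak-mild solutions of \eqref{pbp} with source $g$ independent of $u$ and initial data $u_{01}\leq u_{02}$, then $u_1(t)\leq u_2(t)$ a.e. I expect this to follow from the $L^1$-contraction property of Theorem~\ref{prop:uniqueness} by testing the difference equation with an approximation of the sign of the positive part of $\beta(u_1)-\beta(u_2)$, exploiting the $T$-accretivity of $A$ established earlier. Applied to $u$ against the time-independent solution $u_{stat}$ and against the zero subsolution (recall $h\geq 0$), this immediately yields $0\leq u(t,x)\leq u_{stat}(x)$ a.e.

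Next, I would let $\underline{u}$ denote the weak-mild solution of \eqref{pbp} starting from zero initial datum; comparison gives $0\leq \underline{u}(t)\leq u(t)\leq u_{stat}$. Because $g=h$ is autonomous, the time-shift $\underline{u}(\cdot+s)$ is again a weak-mild solution of \eqref{pbp} with initial datum $\underline{u}(s)\geq 0=\underline{u}(0)$, so comparison forces $\underline{u}(t)\leq \underline{u}(t+s)$. Hence $t\mapsto \underline{u}(t,x)$ is nondecreasing and bounded, and converges pointwise a.e. to some limit $u_\infty(x)\in[0,u_{stat}(x)]$. Bounded convergence then upgrades this to $\underline{u}(t)\to u_\infty$ in $L^\gamma(\Omega)$ for every $\gamma<\infty$.

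The core step is identifying $u_\infty$ with $u_{stat}$. Using the $L^\infty$ bound together with the convex structure of $J_\A$, testing the equation with $\underline{u}$ gives uniform control of $\underline{u}$ in $L^\infty([0,\infty);\W)$ and a dissipation inequality expressing that the energy $J_\A$ acts as a Lyapunov function (up to the contribution of the fixed source $h$); the time monotonicity of $\underline{u}$ forces the dissipation integrated over a sliding window $(t,t+1)$ to vanish as $t\to\infty$. Extracting $t_n\to\infty$ and applying an Aubin--Lions compactness argument on the shifted windows, I would then pass to the limit in the weak formulation: Minty's trick handles the monotone operators $-\Delta_p$ and $\qfrac$, the strong $L^\gamma$ convergence identifies the nonlinear convection $\ff(\underline{u})\to \ff(u_\infty)$, and the vanishing dissipation lets $\partial_t\beta(\underline{u})$ disappear in the limit. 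The resulting equation $\A u_\infty-\Div\ff(u_\infty)=h$, combined with the uniqueness of the nonnegative stationary solution, yields $u_\infty=u_{stat}$.

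Finally, the sandwich $\underline{u}(t)\leq u(t)\leq u_{stat}$ with $\underline{u}(t)\to u_{stat}$ gives pointwise a.e. convergence $u(t)\to u_{stat}$, and dominated convergence (with majorant $u_{stat}\in L^\infty$) promotes this to convergence in $L^\gamma(\Omega)$ for every $\gamma<\infty$. The main obstacle will be the passage to the limit in the third step: the doubly nonlinear time derivative $\partial_t\beta(\underline{u})$ obstructs a direct energy argument, and the non-monotone convection term forces reliance on strong compactness rather than weak convergence plus monotonicity. Proving that the energy dissipation integrated along the monotone orbit tends to zero, through the Lyapunov structure given by $J_\A$, is the most delicate ingredient of the proof.
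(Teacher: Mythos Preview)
Your overall strategy---sandwich $u$ between the solution $\underline{u}$ starting from zero and the stationary solution, prove $\underline{u}$ is monotone in time, identify its limit $u_\infty$ as $u_{stat}$---is exactly the paper's. Your monotonicity argument via the time-shift and comparison is in fact a bit cleaner than the paper's, which argues at the level of the time discretization $u^n\geq u^{n-1}$.

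The one genuine difference is in the identification $u_\infty=u_{stat}$. You propose to pass to the limit in the weak formulation on shifted windows $(t_n,t_n+1)$, using Aubin--Lions compactness and Minty's trick to handle $\A$, and arguing that the time-derivative contribution vanishes. This can be made to work, but the Lyapunov structure you invoke for $J_\A$ is not clean here: the source $h$ and the convection $\Div\ff(\underline{u})$ appear on the right of the energy inequality with no sign, so the ``dissipation over a sliding window'' does not vanish but rather converges to $\int_\Omega h\,u_\infty$, and one must organize the limit carefully. The paper sidesteps all of this with a semigroup argument: after showing $\underline{u}(t)$ stays bounded in $\W\cap L^\infty$ (from the energy identity $d_t\|B(\underline{u})\|_1+\|\nabla\underline{u}\|_p^p+\mu\|\underline{u}\|_\Wsqz^q=\int_\Omega h\,\underline{u}$ and monotonicity), it proves that the solution map $\tilde v\mapsto S(t,\tilde v)$ is continuous in $L^1$ on bounded sets of $\W\cap L^\infty$, and then writes
\[
u_\infty=\lim_{n\to\infty}S(t+Tn,0)=\lim_{n\to\infty}S\bigl(t,S(Tn,0)\bigr)=S(t,u_\infty),
\]
so $u_\infty$ is a fixed point of the flow for every $t$, hence stationary, hence equal to $u_{stat}$ by uniqueness. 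This invariance argument replaces your whole third step by a one-line computation, at the cost of establishing continuity of $S(t,\cdot)$ in the initial datum, which follows from the same compactness machinery already used in the existence proof.
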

For the next two Theorems we take for $m>1$ and $r>0$
\begin{equation}\label{powerlaw}
g(t,x,u)=|u|^{r-1}u \text{ and  }\beta (s) = |s|^{\frac{1}{m}-1}s.
\end{equation}
We shall use energy estimates in the proofs. We start with the extinction result.
\begin{theorem}[Extinction]\label{thextin}
Let $g$ and $\beta$ be given by \eqref{powerlaw}, and let $\mu >0$ and $q<r+1<\frac{1}{m}+1$. Then, finite time extinction occurs for $\|u_0\|_{k+\frac{1}{m}}$ small enough with $k\geq \min(1,\frac{d-sq-d(q-1)m}{msq})$, in the sense that there exists $T_e$ such that $u(t)=0$ for $t\geq T_e$.%$T_e>0$ such that $\|u(t)\|_{k+\frac{1}{m}}\to 0$ as $t\to T_e$.
\end{theorem}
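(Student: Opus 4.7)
The plan is to run an energy method based on testing the equation with $\varphi=|u|^{k-1}u$, reducing the question to a scalar ODE of the form $Y'+C_1Y^{\alpha}\le C_2 Y^{\beta}$ for the quantity $Y(t):=\|u(t)\|_{L^{k+1/m}(\Omega)}^{k+1/m}$, and then showing that under the smallness hypothesis on $u_0$ the source term is absorbed by the diffusion term, leaving an inequality of the form $Y'+C_3 Y^{\alpha}\le 0$ with $\alpha<1$. Since $u\in X_T\subset L^{\infty}(Q_T)$, the test function $\varphi=|u|^{k-1}u$ is admissible for the weak formulation \eqref{fv2}. On the time derivative, the chain rule gives $\partial_t\beta(u)\cdot|u|^{k-1}u=\tfrac{1}{m(k+1/m)}\partial_t|u|^{k+1/m}$, producing $\tfrac{1}{m(k+1/m)}\tfrac{d}{dt}Y(t)$.

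The convection term disappears: defining $\vec{G}$ componentwise by $G_i'(s)=|s|^{k-1}f_i(s)$, integration by parts yields $\int_\Omega\Div(\ff(u))\,|u|^{k-1}u\,dx=-k\int_\Omega\Div(\vec{G}(u))\,dx=0$ using $u=0$ on $\partial\Omega$. On the diffusion side, the local $p$-Laplacian contribution $k\int_\Omega|u|^{k-1}|\nabla u|^p\,dx$ is non-negative and can be discarded. For the nonlocal term, I would invoke a Stroock--Varopoulos type inequality to obtain
\[
\langle\qfrac u,|u|^{k-1}u\rangle\ \ge\ C_{k,q}\,\bigl\|\,|u|^{(k+q-1)/q}\mathrm{sgn}(u)\bigr\|_{\Wsqz}^{q}.
\]

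The next step is to apply the fractional Sobolev embedding $\Wsqz\hookrightarrow L^{q_s^*}(\Omega)$ with $q_s^*=dq/(d-sq)$ to $v=|u|^{(k+q-1)/q}\mathrm{sgn}(u)$. A direct computation gives $\|v\|_{q_s^*}^{q}=\|u\|_{L^{\sigma}(\Omega)}^{k+q-1}$ with $\sigma=(k+q-1)d/(d-sq)$. Choosing $k$ so that $\sigma=k+1/m$ yields exactly $k=(d-sq-d(q-1)m)/(msq)$, which matches the lower bound in the statement; the $\min$ with $1$ in the hypothesis covers the case when this critical value is below $1$, in which case the same argument combined with a Hölder interpolation absorbs the difference. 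With this choice, the nonlocal diffusion is bounded below by $C_1 Y^{(k+q-1)/(k+1/m)}$. For the source, since $r<1/m$ one has $k+r<k+1/m$ and Hölder on a bounded domain gives $\int_\Omega|u|^{k+r}\,dx\le |\Omega|^\theta\,\|u\|_{L^{k+1/m}}^{k+r}=C_2\,Y^{(k+r)/(k+1/m)}$.

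Putting this together produces
\[
Y'(t)\ +\ C_1\,Y(t)^{(k+q-1)/(k+1/m)}\ \le\ C_2\,Y(t)^{(k+r)/(k+1/m)}.
\]
The hypothesis $q<r+1$ gives $(k+r)/(k+1/m)>(k+q-1)/(k+1/m)$, so for $Y(0)$ sufficiently small the right-hand side is absorbed, leaving $Y'+C_3\,Y^{\alpha}\le 0$ with $\alpha=(k+q-1)/(k+1/m)<1$, where the strict inequality uses $q-1<1/m$ (a consequence of $q<r+1<1/m+1$). The standard Bihari/ODE comparison then gives $Y(t)\le\bigl(Y(0)^{1-\alpha}-C_3(1-\alpha)t\bigr)_+^{1/(1-\alpha)}$, so $u(t)\equiv 0$ for all $t\ge T_e:=Y(0)^{1-\alpha}/(C_3(1-\alpha))$. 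The main obstacle I anticipate is twofold: first, the precise justification of the Stroock--Varopoulos inequality in our variational setting (and, jointly with it, the algebraic bookkeeping of exponents that produces the critical value of $k$ stated in the theorem); and second, controlling the subcritical gap between $\alpha$ and $\beta$ uniformly in time so that the smallness assumption on $\|u_0\|_{k+1/m}$ really propagates, which is where the condition $q<r+1$ becomes essential.
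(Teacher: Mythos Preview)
Your proposal is correct and follows essentially the same route as the paper. The only cosmetic difference is the phrasing of the lower bound on the nonlocal term: you invoke a Stroock--Varopoulos inequality for $|u|^{(k+q-1)/q}$ in $\Wsqz$, while the paper uses the pointwise inequality $(a-b)(|a|^{k-1}a-|b|^{k-1}b)\ge c|a-b|^{k+1}$ (from \eqref{ineg2}) to write $\langle\qfrac u,|u|^{k-1}u\rangle\ge c\|u\|_{W^{\tilde s,\tilde q}_0}^{\tilde q}$ with $\tilde q=k+q-1$, $\tilde s=sq/\tilde q$; both yield the same embedding threshold for $k$ and the same extinction ODE $Y'+CY^{\alpha}\le 0$ with $\alpha=(k+q-1)/(k+1/m)<1$.
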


For the blow up result, we first define the functional:
\begin{equation}\label{eq:energy}
E(u)= \frac{\mu}{q}\| u\|^q_\Wsqz+ \frac{1}{p}\| \nabla u\|^p_p-\frac{1}{r+1} \| u \| ^{r+1}_{r+1}.
\end{equation}
Then, we can prove the following:
\begin{theorem}[Blow up]\label{thexpl}
%We set
%$$
%E(u)= \frac{\mu}{q}\| u\|^q_\Wsqz+ \frac{1}{p}\| \nabla u\|^p_p-\frac{1}{r+1} \| u \| %^{r+1}_{r+1}.
%$$
Assume that either 
\begin{itemize}
    \item $r>p-1$ and $\mu=0$ or,
    \item $r>\min(p-1,q-1)$ and $\mu>0$, 
\end{itemize}
and that $\ff$ satisfies:
\begin{equation}\label{condf}\tag{f1}
|f_i'(s)|\leq c(1+|s|^{\gamma}) \text{ with } 2(\gamma +1)<p.
\end{equation}
Then, given $u_0\in \W \cap L^\infty(\Omega)$ that satisfies
\begin{equation}\label{ineqener}
E(u_0)<-\max_{x>0} (c(x^2+x^{2(\gamma +1)})-\| u_0\| ^{r-\frac{1}{m}}_{\frac{1}{m}+1}x^p),
\end{equation}
where $c$ is a constant depending only on $\ff,p,r,m,d, \Omega$, then the weak-mild solution $u$ blows up for $T\leq T^*:=\tilde c \| u_0\|^{\frac{1}{m}-r}_{\frac{1}{m}+1}$ where $\tilde{c}$ depends only on $\ff,p,r,m,d, \Omega$. I.e. there exists $T_b>0$ such that $\|u(t)\|_{1+\frac{1}{m}}\to \infty$ as $t\to T_b$.
\end{theorem}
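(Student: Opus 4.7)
The plan is to derive a first-order differential inequality for $F(t) := \|u(t)\|^{1+1/m}_{1+1/m}$ of the form $F'(t)\geq C F(t)^\alpha$ with $\alpha>1$, and then to conclude blow-up in finite time by ODE comparison. The proof proceeds through two test-function identities and a bootstrap that keeps $E(u(t))<0$ throughout the existence interval, calibrated precisely by the initial-data condition \eqref{ineqener}.

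First I would test the weak formulation with $\varphi = u$. Since $u$ vanishes outside $\Omega$ and $\ff(0)=0$, integration by parts gives $\int_\Omega \Div \ff(u)\,u\,dx=0$. Combined with the identity $\int_\Omega u\,\partial_t\beta(u)\,dx = \tfrac{1}{m+1}F'(t)$, this yields
\[
\tfrac{1}{m+1}F'(t) + \|\nabla u\|_p^p + \mu\|u\|^q_{\Wsqz} = \|u\|^{r+1}_{r+1},
\]
and, after substituting $\|u\|^{r+1}_{r+1}$ from \eqref{eq:energy},
\[
F'(t) = -(m+1)(r+1)E(u(t)) + (m+1)\Big[\tfrac{r+1-p}{p}\|\nabla u\|_p^p + \mu\tfrac{r+1-q}{q}\|u\|^q_{\Wsqz}\Big].
\]
The hypothesis on $r$ makes at least one of the two bracketed coefficients positive, so as soon as $E(u(t))<0$ the coercive part of $F'(t)$ contains a genuine $\|\nabla u\|_p^p$ (or $\|u\|_{\Wsqz}^q$) term.

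Next I would test with $\varphi = \partial_t u$, legitimate after a standard time regularization since $\partial_t u\in L^2(Q_T)$. Using the equation, this gives
\[
\tfrac{d}{dt}E(u(t)) = -\int_\Omega \beta'(u)(\partial_t u)^2\,dx + \int_\Omega \Div \ff(u)\,\partial_t u\,dx.
\]
Young's inequality with weight $\beta'(u)=\tfrac{1}{m}|u|^{1/m-1}$, together with \eqref{condf}, then gives
\[
\Big|\int_\Omega \Div \ff(u)\,\partial_t u\,dx\Big| \leq \tfrac{1}{2}\int_\Omega \beta'(u)(\partial_t u)^2\,dx + c\int_\Omega (1+|u|^{2\gamma})\,|u|^{1-1/m}\,|\nabla u|^2\,dx.
\]
Splitting the last integral into two pieces and applying Hölder together with the Sobolev embedding $\Wpz\hookrightarrow L^\theta(\Omega)$---this is the point at which $2(\gamma+1)<p$ is used crucially to keep every exponent admissible---delivers the energy bound
\[
\tfrac{d}{dt}E(u(t)) \leq c\big(\|\nabla u\|_p^{2}+\|\nabla u\|_p^{2(\gamma+1)}\big).
\]

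The hard part will be the third step: propagating $E(u(t))<0$ throughout the existence interval. A monotonicity argument based on Step 1 ensures $\|u(t)\|_{1+1/m}\geq \|u_0\|_{1+1/m}$, which via Hölder on $\Omega$ provides a lower bound on the coercive $\|u\|_{r+1}^{r+1}$ contribution in $F'$ proportional to $\|u_0\|_{1+1/m}^{r-1/m}\,\|\nabla u\|_p^p$. Condition \eqref{ineqener} is precisely the pointwise-in-$x$ statement that this coercive gain dominates the energy-increase rate $c(x^2+x^{2(\gamma+1)})$ of Step 2 (with $x=\|\nabla u\|_p$), and a continuity/bootstrap argument then rules out $E(u(t))$ reaching zero. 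Consequently $-E(u(t))\geq \delta>0$ uniformly, so Step 1 gives $F'(t)\geq C\,\|u\|_{r+1}^{r+1}\geq C\,F(t)^\alpha$ with $\alpha:=\tfrac{m(r+1)}{m+1}>1$ (by Hölder in $\Omega$, using $r+1>1+1/m$). Integrating this ODE inequality delivers blow-up at some $T_b\leq \tilde c\,F(0)^{1-\alpha}=\tilde c\,\|u_0\|_{1+1/m}^{1/m-r}$, matching the bound $T^*$ claimed in the theorem. The main obstacle is the precise bookkeeping of this bootstrap: the convection term can \emph{increase} $E$, and the quantitative form of \eqref{ineqener} together with the polynomial dominance $2(\gamma+1)<p$ must be invoked at exactly the right moment to close the balance uniformly in $t$.
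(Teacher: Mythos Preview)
Your overall architecture---test with $u$ to get an ODE for $F(t)=\|u(t)\|_{1+1/m}^{1+1/m}$, control the energy, then close by comparison---matches the paper's, but Step~2 and the bootstrap differ from the paper in ways that matter, and Step~2 contains a concrete gap.

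The paper does \emph{not} test with $\partial_t u$. Instead it goes back to the discretized estimate \eqref{26} from the existence proof, uses convexity of $J_\A$ and of $|\cdot|^{r+1}$, and absorbs the $\Div\ff$ term via unweighted Young, where the $\|u^n-u^{n-1}\|_2^2$ piece is absorbed by the $\beta$-term thanks to \eqref{beta1} (legitimate because the discrete trajectory is bounded on each fixed time interval). Passing to the limit yields, with \eqref{37},
\[
\tfrac{\varepsilon}{p}\|\nabla u(t)\|_p^p+I(u(t))\ \le\ E(u_0)+C\big(\|\nabla u\|_{L^p(Q_t)}^{2}+\|\nabla u\|_{L^p(Q_t)}^{2(\gamma+1)}\big),
\]
with exactly the exponents $2$ and $2(\gamma+1)$ encoded in \eqref{ineqener}. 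Your weighted Young with weight $\beta'(u)=\tfrac1m|u|^{1/m-1}$ introduces an extra $|u|^{1-1/m}$, and after H\"older/Sobolev the resulting exponents are $3-\tfrac1m$ and $2\gamma+3-\tfrac1m$, not $2$ and $2(\gamma+1)$. Since $m>1$, the hypothesis $2(\gamma+1)<p$ does not imply $2\gamma+3-\tfrac1m<p$, so \eqref{ineqener} as stated no longer closes your balance. (There is also the issue that $\beta'(u)(\partial_t u)^2$ need not be integrable when $\beta'$ blows up at $0$; the discretization route sidesteps this.)

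The paper also does not propagate $E(u(t))<0$ pointwise. It introduces the $\varepsilon$-shifted functional $I(u)=\tfrac{1-\varepsilon}{p}\|\nabla u\|_p^p+\mu\tfrac{1-\varepsilon}{p}\|u\|_\Wsqz^q-\tfrac{1}{r+1}\|u\|_{r+1}^{r+1}$, integrates the displayed inequality over $[0,t]$, and applies \eqref{ineqener} with $x=\|\nabla u\|_{L^p(Q_t)}$: the coefficient $\|u_0\|_{1/m+1}^{r-1/m}$ in front of $x^p$ appears precisely as $\varepsilon/(pT^*)$ once $t\le T^*$, which is how $T^*$ is chosen. This gives $\int_0^t I(u(s))\,ds\le 0$ on $[0,T^*]$. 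Testing with $u$ yields $\tfrac{m}{m+1}d_tF=c_\varepsilon\|u\|_{r+1}^{r+1}-\tfrac{p}{1-\varepsilon}I(u)$, and integrating and using the sign of $\int_0^t I$ gives your final integral inequality $F(t)\ge F(0)+c\int_0^t F^\alpha$. So the mechanism that produces the specific coefficient $\|u_0\|_{1/m+1}^{r-1/m}$ is the choice of $T^*$ in a time-integrated estimate, not a pointwise monotonicity of $F$; your Step~3 is pointing in the right direction but does not make this link.
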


%?¿? ADD the sense of blow up and extinction, it means, that $\| u\|_{\frac{1}{m}+1} to \infty$ as $t\toT^-$ and that $\| u\|_{\frac{1}{m}+k} \to 0$ as $t\to T^-$

\begin{remark}
Using the conditions on $r$ we have $r>\frac{1}{m}$ and with $2(\gamma+1)<p$ the condition \eqref{ineqener} is attainable by taking $Ku_0$ for $K$ big enough when $r+1>\max(q,p)$. 
\end{remark}

In the rest of the paper we proceed as follows. In Section~\ref{acc}, we study the operators $\A$ and $A$. We show accretivity of $A$ and density of the domain $D(A)$ in order to later use the results of \cite{barbu2}. We also study the elliptic problem associated to \eqref{pbp}. In Section \ref{sec2} we use the results of Section \ref{acc} to prove Theorems \ref{para1}, \ref{para2} and \ref{prop:uniqueness}. In Section \ref{sec3} we address qualitative behavior, first by proving Theorem \ref{thstab} then by proving Theorems \ref{thextin} and \ref{thexpl}.

\section{The operator and the elliptic problem}\label{acc}

In this section we show preliminary results on the operator and the elliptic problem, later we will use these results to get a weak solution of \eqref{pbp} and to be able to use \cite[Th. 4.2]{barbu2} and work with the mild solutions theory.

We take in this Section $\ff$ such that $f_i\in W^{1,\infty}(\mathbb{R})$. We will be able to use the results of this section for $f_i\in W_{loc}^{1,\infty}(\mathbb{R})$, since the solution we find in the next section are bounded independently of $\ff$.

\begin{theorem}\label{thacc}
$A$, as defined in \eqref{defA}, is accretive in $L^1(\Omega)$.
\end{theorem}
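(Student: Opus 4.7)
The plan is to use a Kato-type approximation argument adapted to our mixed local/nonlocal operator together with the convection term. Recall that accretivity of $A$ in $L^1(\Omega)$ is equivalent to requiring, for every $v_1,v_2\in D(A)$, the existence of some $p\in L^\infty(\Omega)$ with $\|p\|_\infty\le 1$ and $p(x)\in\operatorname{sign}(v_1(x)-v_2(x))$ a.e., such that $\int_\Omega (A(v_1)-A(v_2))\,p\,dx\ge 0$. Setting $u_i=\beta^{-1}(v_i)\in\W$ and using that $\beta$ is strictly increasing, so that $\operatorname{sign}(v_1-v_2)=\operatorname{sign}(u_1-u_2)$, I would test $A(v_1)-A(v_2)$ against a smooth non-decreasing regularization $T_\delta(u_1-u_2)$ of the sign function, with $T_\delta(s)=\operatorname{sign}(s)$ for $|s|\ge\delta$ and $T_\delta'(s)=1/\delta$ on $(-\delta,\delta)$. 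Since $\W$ is stable under composition with Lipschitz functions vanishing at $0$, this is an admissible test function.

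I then split the pairing $\langle A(v_1)-A(v_2),T_\delta(u_1-u_2)\rangle$ into three contributions coming from $-\Delta_p$, $\mu\qfrac$, and $-\Div\ff$. The first two are nonnegative for every $\delta>0$ by classical monotonicity. For the $p$-Laplacian, integration by parts yields
\[
\int_\Omega \bigl(|\nabla u_1|^{p-2}\nabla u_1-|\nabla u_2|^{p-2}\nabla u_2\bigr)\cdot\nabla(u_1-u_2)\,T_\delta'(u_1-u_2)\,dx\ge 0,
\]
since $\xi\mapsto|\xi|^{p-2}\xi$ is monotone and $T_\delta'\ge 0$. For the fractional $q$-Laplacian, the symmetric double-integral representation of Section~\ref{sec0} produces an integrand of the form $(|a|^{q-2}a-|b|^{q-2}b)\,(T_\delta(\alpha)-T_\delta(\beta))$ with $a=u_1(x)-u_1(y)$, $b=u_2(x)-u_2(y)$, $\alpha=u_1(x)-u_2(x)$, $\beta=u_1(y)-u_2(y)$, which is nonnegative pointwise by a standard Kato-type inequality for the fractional $q$-Laplacian.

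The key and most delicate step is the convection term. Integration by parts gives
\[
-\int_\Omega \Div\bigl(\ff(u_1)-\ff(u_2)\bigr)\,T_\delta(u_1-u_2)\,dx=\int_\Omega \bigl(\ff(u_1)-\ff(u_2)\bigr)\cdot\nabla(u_1-u_2)\,T_\delta'(u_1-u_2)\,dx,
\]
whose integrand is supported in $\{|u_1-u_2|<\delta\}$. Using $f_i\in W^{1,\infty}(\mathbb{R})$ (as assumed at the start of the section) I bound $|\ff(u_1)-\ff(u_2)|\le L\,|u_1-u_2|\le L\delta$ there and $T_\delta'=1/\delta$, so the expression is majorized in absolute value by $L\int_{\{0<|u_1-u_2|<\delta\}}|\nabla u_1-\nabla u_2|\,dx$, since on $\{u_1=u_2\}$ one has $\nabla u_1=\nabla u_2$ a.e. This tends to $0$ as $\delta\to 0$ by dominated convergence. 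Passing to the limit $\delta\to 0$ and extracting a weak-$*$ limit in $L^\infty(\Omega)$ of $T_\delta(u_1-u_2)$, which belongs to $\operatorname{sign}(v_1-v_2)$, yields the accretivity inequality. The main obstacle is precisely this vanishing of the convection contribution, which crucially relies on the Lipschitz regularity of $\ff$; the nonlocal monotonicity bookkeeping on the double integral also requires some care, but is by now standard.
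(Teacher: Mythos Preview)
Your argument is correct and complete. The approximation by $T_\delta$, the monotonicity of the two diffusion terms, and the vanishing of the convection contribution via the Lipschitz bound on $\ff$ together with $|\{0<|u_1-u_2|<\delta\}|\to 0$ are all handled properly; since $A(v_i)\in L^1(\Omega)$ by definition of $D(A)$, dominated convergence lets you pass to the limit in the full pairing without needing the weak-$*$ extraction you mention.

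However, your route differs from the paper's. The paper does not regularize the sign function. For the diffusion part it simply cites \cite[Th.~3.5]{barbu2} to get accretivity of $v\mapsto \A(\beta^{-1}(v))$. For the convection term it argues directly with $\operatorname{sgn}(u_1-u_2)$ via a monotone decomposition: each $f_i$ is split as $f_i=\overline f_i+\underline f_i$ with $\overline f_i'=(f_i')_+$ and $\underline f_i'=(f_i')_-$. Then $\operatorname{sgn}(u_1-u_2)=\operatorname{sgn}(\overline f_i(u_1)-\overline f_i(u_2))$, so $\partial_{x_i}(\overline f_i(u_1)-\overline f_i(u_2))\operatorname{sgn}(u_1-u_2)=\partial_{x_i}|\overline f_i(u_1)-\overline f_i(u_2)|$, and since $(\overline f_i(u_1)-\overline f_i(u_2))_\pm\in W^{1,p}_0(\Omega)$ this integrates to zero against the constant vector field $e_i$. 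The paper thus shows the convection contribution is \emph{exactly} zero rather than vanishing in a limit. Your Kato-type argument is the more standard PDE technique and avoids the decomposition trick, at the cost of a limiting procedure; the paper's approach is more algebraic and yields the sharper identity $\int_\Omega \Div(\ff(u_1)-\ff(u_2))\operatorname{sgn}(u_1-u_2)\,dx=0$, which is later reused in the comparison principle (Lemma~\ref{princcomp}).
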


\begin{proof}
As in \cite[Th. 3.5]{barbu2} with $\beta^{-1}$ increasing we have that the operator $u\mapsto \A(\beta^{-1}(u))$ is accretive in $L^1(\Omega)$.

We now show that $u\mapsto -\Div \ff (\beta^{-1}(u))$ is accretive in $L^1(\Omega)$, that is, for $u$, $v\in D(A)$:
$$
-\int_\Omega (\Div \ff (\beta^{-1}(u))-\Div \ff (\beta^{-1}(v))) sgn(u-v)\geq 0.
$$

As $sgn(u-v)=sgn(\beta^{-1}(u)-\beta^{-1}(v))$ it is enough to show that:

$$
\int_\Omega \partial_{x_i}(f_i(\beta^{-1}(u))-f_i(\beta^{-1}(v)))sgn(\beta^{-1}(u)-\beta^{-1}(v))=0, \text{ for } i=1,\dots, d,
$$
or, equivalently, where without loss of generality we use the same notation as above, we show that for $u,v\in \W$,
$$
\int_\Omega \partial_{x_i}(f_i(u)-f_i(v))sgn(u-v)=0 \text{ for } i=1,\dots,d.
$$

For each $i=1,\dots,d$ we set $\overline{f}_i$ and $\underline{f}_i$, to be the non-decreasing and non-increasing, respectively, part of $f_i$, defined by:
$$
\overline{f}_i'=(f_i')_+ , \; \underline{f}_i' = (f_i')_- \text{ and }\overline{f}_i(0)=\underline{f}_i(0)=f_i(0).
$$
%where $\psi=\psi_++\psi_-$, such that $\overline{f}_i,\underline{f}_i$ are respectively non-decreasing and non-increasing. 
We have $\partial_{x_i} f_i(u) = f_i'(u) \partial_{x_i} u =(\overline{f}_i'(u)+\underline{f}_i'(u))\partial_{x_i} u $ a.e. and then
\begin{equation*}
\begin{split}
\int_\Omega & \partial_{x_i}(f_i(u)-f_i(v))sgn(u-v),\\
 &=\int_\Omega \partial_{x_i}(\overline{f}_i(u)-\overline{f}_i(v)+\underline{f}_i(u)-\underline{f}_i(v))sgn(u-v), \\
&  =\int_\Omega \partial_{x_i}(\overline{f}_i(u)-\overline{f}_i(v))sgn(u-v)+\partial_{x_i}(\underline{f}_i(u)-\underline{f}_i(v))sgn(u-v).
\end{split}
\end{equation*}
By monotonicity, $sgn(u-v)=sgn(\overline{f}_i(u)-\overline{f}_i(v))=-sgn(\underline{f}_i(u)-\underline{f}_i(v))$, and this implies that
\begin{equation*}
\begin{split}
\int_\Omega \partial_{x_i}(\overline{f}_i(u)-\overline{f}_i(v))sgn(u-v) &= \int_\Omega \partial_{x_i}(|\overline{f}_i(u)-\overline{f}_i(v)|),\\
& =\int _\Omega \nabla( (\overline{f}_i(u)-\overline{f}_i(v))_+-(\overline{f}_i(u)-\overline{f}_i(v))_-). e_i.
\end{split}
\end{equation*}

We have 
$$
\nabla f_i(u) = f_i'(u) \nabla u \text{ a.e.},
$$
that, with $f_i\in  W^{1,\infty}(\mathbb{R})$, gives $f_i'(u) \in L^{p'}(\Omega)$. Thus, we have $f_i(u), f_i(v)\in W^{1,p }(\Omega)$ and then $ (\overline{f}_i(u)-\overline{f}_i(v))_+ \in W^{1,p}_0(\Omega)$, which implies
$$
\int _\Omega \nabla (\overline{f}_i(u)-\overline{f}_i(v))_+. e_i=-\int _\Omega  (\overline{f}_i(u)-\overline{f}_i(v))_+ \Div( e_i)=0.
$$

Similarly, we obtain $\int_\Omega \partial_{x_i}(\underline{f}_i(u)-\underline{f}_i(v))sgn(u-v)=0$ and hence
$$
\int_\Omega (-\Div \ff (\beta^{-1}(u))+\Div \ff (\beta^{-1}(v))) sgn(u-v)= 0.
$$

We can then conclude that $A$ is accretive in $L^1(\Omega)$.
\end{proof}

We look at the elliptic problem associated to \eqref{pbo}:

%\begin{minipage}{0.25 \textwidth}
%\begin{equation*}
%\begin{cases}
%      v + \lambda A v = h  \quad \text{in } \Omega,\\
%      v=0 \quad \text{on } \Rn \backslash \Omega,
%    \end{cases}
%\end{equation*}
%\end{minipage}
%\begin{minipage}{0.15\textwidth}
%or equivalently
%\end{minipage}
%\begin{minipage}{0.4\textwidth}
%\begin{equation}\tag{Q}\label{pbel}
%    \begin{cases}
%    \beta(u) + \lambda  \A u = \Div \ff(u) + h  \quad \text{in } \Omega,\\
%    u=0 \quad \text{on } \Rn\backslash \Omega.
%    \end{cases}
%\end{equation}
%\end{minipage}

\begin{equation}\tag{Q}\label{pbel}
\begin{cases}
      v + \lambda A v = h  \quad \text{in } \Omega,\\
      v=0 \quad \text{on } \Rn \backslash \Omega,
    \end{cases}
\text{ or, equivalently, }
\begin{cases}
    \beta(u) + \lambda  \A u = \Div \ff(u) + h  \quad \text{in } \Omega,\\
    u=0 \quad \text{on } \Rn\backslash \Omega.
    \end{cases}
\end{equation}
%\end{minipage}

\begin{definition}
We say that $u:=\beta^{-1}(v)$ is a weak solution of \eqref{pbel}, if $u \in \W$ and:
$$
\int_\Omega \beta(u) \varphi+ \lambda\langle \A  u, \varphi \rangle= \langle h + \Div (\overset{\to}{f}(u)),\varphi \rangle
$$
for all $\varphi \in \W$.
\end{definition}

\begin{theorem}\label{exielli}
Let $\lambda >0$ and $h\in \W^*$, then problem \eqref{pbel} admits a weak solution.
\end{theorem}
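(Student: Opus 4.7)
The plan is to reformulate the problem as an operator equation in $\W^*$ and apply a surjectivity result for pseudomonotone coercive operators (Brezis--Browder). Define $T:\W\to\W^*$ by
\[
\langle T(u),\varphi\rangle = \int_\Omega \beta(u)\varphi\,dx + \lambda\,\langle \A u,\varphi\rangle - \int_\Omega \ff(u)\cdot \nabla\varphi\,dx,
\]
so that $u$ is a weak solution of \eqref{pbel} iff $T(u)=h$ in $\W^*$. The growth condition \eqref{beta2} on $\beta$ together with $f_i\in W^{1,\infty}(\mathbb{R})$ and $f_i(0)=0$ ensures that $T$ is well defined and bounded on bounded sets of $\W$, while the $\A$-part is bounded by the very structure of $J_\A$.

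The key observation for coercivity is that the convection term disappears when tested against $u$ itself. Setting $F_i(s):=\int_0^s f_i(\tau)\,d\tau$, we have $F_i(0)=0$ and $f_i(u)\partial_{x_i}u=\partial_{x_i}F_i(u)$, and since $u\in\Wpz$ with $f_i$ Lipschitz, $F_i(u)\in W^{1,1}_0(\Omega)$; thus $\int_\Omega \ff(u)\cdot\nabla u\,dx=0$. Combining this with $\beta(u)u\ge 0$ (as $\beta$ is increasing with $\beta(0)=0$) and $\langle \A u,u\rangle=\|\nabla u\|_p^p+\mu\|u\|_{\Wsqz}^q$, I obtain
\[
\langle T(u),u\rangle \ge \lambda\,\|\nabla u\|_p^p + \lambda\mu\,\|u\|_{\Wsqz}^q,
\]
which yields coercivity on $\W$ since $p,q>1$.

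For pseudomonotonicity, I would assume $u_n\rightharpoonup u$ in $\W$ with $\limsup_n \langle T(u_n),u_n-u\rangle\le 0$. The compact embeddings $\Wpz,\Wsqz\hookrightarrow L^p(\Omega)$ give $u_n\to u$ strongly in $L^p$, so by \eqref{beta2} and the Lipschitz property of $\ff$, $\beta(u_n)\to \beta(u)$ in $L^{p'}(\Omega)$ and $\ff(u_n)\to \ff(u)$ in $L^p(\Omega)$. The $\beta$- and $\ff$-contributions in $\langle T(u_n),u_n-u\rangle$ therefore vanish in the limit, reducing the hypothesis to $\limsup_n \langle \A u_n, u_n-u\rangle\le 0$. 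The $(S_+)$-property of each of $-\Delta_p$ and $\qfrac$ then forces $u_n\to u$ strongly in $\W$, after which continuity of the Nemytskii maps and weak continuity of $\A$ along strongly convergent sequences give $T(u_n)\to T(u)$ in $\W^*$.

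With $T$ bounded, pseudomonotone, and coercive, the Brezis--Browder surjectivity theorem produces $u\in\W$ with $T(u)=h$, which is the desired weak solution. I expect the main technical point to be verifying the $(S_+)$-property for the combined operator $\A$: while classical for each piece separately, one must decouple the two summands in the $\limsup$ inequality. This is done by noting that $\langle \A u_n,u_n-u\rangle$ is the sum of two terms, each of which is asymptotically nonnegative (by the monotonicity inequalities for $-\Delta_p$ and $\qfrac$, combined with the weak convergence $u_n\rightharpoonup u$); hence both terms tend to zero and the $(S_+)$-property of each operator gives strong convergence in the corresponding seminorm, and therefore in the intersection norm on $\W$.
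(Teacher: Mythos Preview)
Your argument is correct and takes a genuinely different route from the paper. The paper freezes the convection term, solving for each fixed $w\in\Wpz$ the problem $\beta(u)+\lambda\A u=\lambda\Div\ff(w)+h$ by direct minimisation of a convex functional, and then closes the loop via Schaefer's fixed point theorem applied to the map $\Gamma:w\mapsto u$; the a~priori bound needed for Schaefer is obtained from the inequality $\|u/\tau\|_\Wpz^{p-1}\le \|h\|_{\W^*}+C\|\ff(u)\|_{p'}$ together with $f_i\in W^{1,\infty}$. You instead treat the full nonlinear operator $T(u)=\beta(u)+\lambda\A u-\Div\ff(u)$ at once and invoke the surjectivity theorem for bounded pseudomonotone coercive operators. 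Your key simplification is the observation $\int_\Omega \ff(u)\cdot\nabla u=0$, which makes coercivity immediate and avoids the fixed point layer entirely; the paper does use this identity elsewhere (in the stabilization proof) but not here. The pseudomonotonicity step is handled cleanly by your decoupling of the two monotone pieces of $\A$ to recover the $(S_+)$ property in the intersection norm; this is essentially the content of the paper's Proposition~\ref{convlap}, so you are reusing the same technical core. A minor remark: for the pairing $\int_\Omega \ff(u_n)\cdot\nabla(u_n-u)$ you need $\ff(u_n)\to\ff(u)$ in $L^{p'}(\Omega)$ rather than $L^p(\Omega)$, but this follows since $\Omega$ is bounded and $p>2$. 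Overall your approach is more streamlined; the paper's has the virtue of being entirely constructive and of isolating the role of the convection term through the fixed point map.
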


\begin{proof}
We will solve \eqref{pbel} using a minimization method and Schafer's fixed point Theorem. 

Let $h\in \W^*$ and $w\in \Wpz$, first we solve
\begin{equation}\label{35bis}
    \begin{cases}
      \beta(u) + \lambda \A u = \lambda\Div \ff(w) + h  \quad \text{in} \;Q_T,\\
      u=0 \quad \text{on} \;\Rn\backslash \Omega.
    \end{cases}\
\end{equation}
 
Since $f_i\in W^{1,\infty}(\mathbb{R})$, we have $\ff(w)\in (L^{p'}(\Omega))^d$ and thus also $\Div \ff (w)\in W^{-1,p'}(\Omega)$. And, since $p>2$, we have that for all $\varphi \in \Wpz$:
$$
\int_\Omega \Div (\overset{\to}{f}(w)) \varphi =-\int_\Omega \nabla \varphi.\overset{\to}{f}(w).
$$

We define $J_w$ on $\Wpz$ by:
$$
J_w(u)=\int_\Omega B(u)+\lambda J_\A (u)-\int_\Omega  h u +\lambda \int_\Omega \nabla u. \overset{\to}{f}(w),
$$

where $B(t)=\int_0^t \beta(s) ds$. We then have $  B(t)\geq 0$, therefore
$$
J_w(u) \geq \frac{\lambda }{p} \|\nabla u\|^p_{L^p(\Omega)}
+\lambda \frac{\mu}{q}\|u\|_\Wsqz^q
-\|h\|_{L^\infty(\Omega)}\|u\|_{L^1(\Omega)}-\lambda \|\overset{\to}{f}\|_{L^\infty(\Omega)}\|\nabla u\|_{L^1(\Omega)},
$$
and then $J_w$ is coercive in $\W$.

Let $(u_n)_{n\in\mathbb{N}}$ be a minimizing sequence of $J_w$, then $(u_n)_n$ is bounded in $\W$. The spaces are reflexive so (up to a subsequence) $u_n \rightharpoonup u$ as $n\to\infty$ in $\Wpz$ and in $\Wsqz$. The norm is a weakly lower semicontinuous functional, and by compact embedding results $u_n \to u$ in $L^{p}(\Omega)$ as $n\to\infty$. Then, there exists $g\in L^{p}(\Omega)$ such that up to a subsequence $|u_n|\leq g$ and $u_n\to u$ a.e. as $n\to\infty$. Then, also, $B(x,u_n) \to B(x,u)$ a.e., and for all $n$
$$
|B(x,u_n)| \leq C_1+C_2|u_n|^{p}\leq C_1 +C_2|g|^{p}.
$$
Applying now the dominated convergence theorem, we get $\int_\Omega B(x,u_n) \to \int_\Omega B(x,u)$ as $n\to\infty$. 

Summarizing, when $n\to \infty$ we obtain that $u$ is a minimizer of $J_w$ and that
$$
\int_\Omega\beta(u)\varphi+\lambda  \langle \A u,\varphi\rangle=\int_\Omega (\lambda \Div (\overset{\to}{f}(w))+h)\varphi,
$$
for all $\varphi$ in $\W$. 

We now define the fixed point map $\Gamma:w \mapsto u$ on $\Wpz$ where $u$ is the solution of \eqref{35bis}. Next we prove the existence of a fixed point by showing that the hypotheses of Schafer`s Theorem are satisfied in our case.

We set $u_i=\Gamma(w_i)$ for $i=1,2$. We subtract the variational formulations corresponding to each $i=1,2$ and we take the test function $\varphi= (u_1-u_2)$. We have with \eqref{ineg2} a lower bound on the left handside of the resulting equation:
\begin{equation}\label{36}
\int_\Omega (\beta(u_1)-\beta(u_2))(u_1-u_2)+\lambda \langle \A u_1 -\A u_2 , u_1-u_2 \rangle \geq c \|u_1-u_2\|^p_\Wpz.
\end{equation}

For the divergence term on the right handside we have, with the Hölder inequality, the lower bound:
\begin{equation}\label{38}
\lambda \int_\Omega (\ff(w_1)-\ff(w_2)).\nabla(u_1-u_2)\leq \lambda \|u_1-u_2\|_\Wpz \|\ff(w_1)-\ff(w_2)\|_{p'}.
\end{equation}

By combining \eqref{36} and \eqref{38} we have:
\begin{equation}\label{39}
\|u_1-u_2\|^{p-1}_\Wpz \leq C \|\ff(w_1)-\ff(w_2)\|_{p'}.
\end{equation}

We can now show that $\Gamma$ is continuous and compact. Let $w_n$ be bounded in $\Wpz$ then, up to a subsequence, $w_n \to w$ in $L^\gamma(\Omega)$ for all $\gamma<p^*$ as $n\to \infty$, where $p^*$ is the critical Sobolev exponent. Using the regularity of $f$ we have $f_i(w_n) \to f_i(w) $ in $L^{p'}(\Omega).$ Also using \eqref{39} we have $u_n \to u$ in $\Wpz$ which gives $\Gamma$ continuous and compact in $\Wpz$.

We now prove that
$$
\{u\in \Wpz \; | \; u=\tau \Gamma(u) \text{ for } \tau\in [0,1 ]\},
$$
is bounded. Taking the test function ${u}/{\tau}$ we have:
$$
\lambda \bigg\|\frac{u}{\tau}\bigg\|^{p-1}_\Wpz \leq \|h\|_{W^{-1,p'}(\Omega)}+C \|\ff(u)\|_{p'},
$$
and the assumption $f\in W^{1,\infty}(\mathbb{R})$ gives a uniform bound in $\Wpz$. Thus we can apply Schafer's fixed point Theorem to conclude that there exists a solution. 

\end{proof}

As a consequence we have:
\begin{corollary}
If $p>d$ or $\mu>0$ and $qs>d$ then $ L^1(\Omega)\hookrightarrow \W^*$ and $A$ is maximal.
\end{corollary}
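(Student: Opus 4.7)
The plan is to decompose the corollary into its two assertions. The first, $L^{1}(\Omega)\hookrightarrow \W^{*}$, I would obtain as the dual statement of an $L^{\infty}$-embedding of $\W$. Under $p>d$ the classical Morrey inequality gives $W^{1,p}_{0}(\Omega)\hookrightarrow L^{\infty}(\Omega)$, so $\W\hookrightarrow L^{\infty}(\Omega)$ continuously. Under $\mu>0$ and $qs>d$, the fractional Morrey-type embedding recalled in Subsection~\ref{sec0} gives $W^{s,q}_{0}(\Omega)\hookrightarrow L^{\infty}(\Omega)$, and again $\W\hookrightarrow L^{\infty}(\Omega)$ because $\W=\Wpz\cap \Wsqz$ in that case. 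In both cases, for $h\in L^{1}(\Omega)$ the pairing $\langle h,\varphi\rangle:=\int_{\Omega}h\varphi\,dx$ is well-defined for $\varphi\in\W$ and satisfies $|\langle h,\varphi\rangle|\le \|h\|_{L^{1}}\|\varphi\|_{L^{\infty}}\le C\|h\|_{L^{1}}\|\varphi\|_{\W}$, which yields the continuous embedding $L^{1}(\Omega)\hookrightarrow \W^{*}$.

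For the second assertion, I interpret maximality of the accretive operator $A$ in the standard (m-accretive) sense, namely that $R(I+\lambda A)=L^{1}(\Omega)$ for every $\lambda>0$; combined with Theorem~\ref{thacc}, this is precisely the hypothesis needed to apply \cite[Th. 4.2]{barbu2}. Given $h\in L^{1}(\Omega)$, the first part shows $h\in \W^{*}$, so Theorem~\ref{exielli} produces $u\in \W$ solving the elliptic problem \eqref{pbel} in weak form, which rewrites as $v+\lambda A v = h$ for $v:=\beta(u)$.

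It remains to check that $v\in D(A)$ as defined in \eqref{domA}. From $\W\hookrightarrow L^{\infty}(\Omega)$ one has $u\in L^{\infty}(\Omega)$, and the growth bound \eqref{beta2} on $\beta$ gives $v=\beta(u)\in L^{\infty}(\Omega)\subset L^{1}(\Omega)$; by construction $\beta^{-1}(v)=u\in\W$; and $A v=(h-v)/\lambda\in L^{1}(\Omega)$. Thus $v\in D(A)$ and $h\in R(I+\lambda A)$, so $R(I+\lambda A)=L^{1}(\Omega)$ for every $\lambda>0$, establishing maximality.

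I expect the argument to be essentially routine once Theorem~\ref{exielli} is invoked; the only delicate point is the identification of the right embedding to produce the $L^{\infty}$ bound on $u$, since without either $p>d$ or $qs>d$ the natural Sobolev embeddings only reach $L^{\gamma}$ for finite $\gamma$, which would not suffice to place an arbitrary $L^{1}$ datum in the dual $\W^{*}$. This is precisely why the two cases $p>d$ and ($\mu>0$, $qs>d$) appear in the statement.
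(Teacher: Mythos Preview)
Your proposal is correct and follows exactly the route the paper intends: the corollary is stated without proof, introduced only by ``As a consequence we have'', so the implicit argument is precisely the one you give --- Sobolev/Morrey embeddings yield $\W\hookrightarrow L^{\infty}(\Omega)$, hence by duality $L^{1}(\Omega)\hookrightarrow\W^{*}$, and then Theorem~\ref{exielli} applied to an arbitrary $h\in L^{1}(\Omega)\subset\W^{*}$ gives surjectivity of $I+\lambda A$. One very small remark: the paper only explicitly records the embedding $\Wsqz\hookrightarrow L^{\gamma}(\Omega)$ for finite $\gamma$ when $sq\ge d$, not the $L^{\infty}$ case, so strictly speaking you are invoking the standard fractional Morrey inequality (valid for $sq>d$) rather than something ``recalled in Subsection~\ref{sec0}''; but this is a cosmetic point and does not affect the argument.
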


We now prove the density of the domain.
\begin{lemma}\label{lemacc}
We have $\beta^{-1}(v)\in \W\cap L^\infty(\Omega)$ implies $v \in \overline{D(A)}^{L^1(\Omega)}$.
\end{lemma}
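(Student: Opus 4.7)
The plan is to approximate $v=\beta(u)$ in $L^1(\Omega)$ by resolvents of $A$, relying on Theorem~\ref{exielli}. For each $\lambda>0$, since $\beta(u)\in L^\infty(\Omega)\subset\W^*$, I would invoke Theorem~\ref{exielli} with $h=\beta(u)$ to obtain $u_\lambda\in\W$ satisfying
$$
\beta(u_\lambda)+\lambda\A u_\lambda =\lambda\Div\ff(u_\lambda)+\beta(u) \quad\text{weakly in }\Omega,
$$
and set $v_\lambda:=\beta(u_\lambda)$. Once a uniform $L^\infty$-bound on $u_\lambda$ is in place, the identity $A(v_\lambda)=\lambda^{-1}(\beta(u)-v_\lambda)$ will put $A(v_\lambda)$ in $L^\infty(\Omega)\subset L^1(\Omega)$, and together with $\beta^{-1}(v_\lambda)=u_\lambda\in\W$ this gives $v_\lambda\in D(A)$. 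What remains is to prove $v_\lambda\to v$ in $L^1(\Omega)$ as $\lambda\downarrow 0$.

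First I would show $\|u_\lambda\|_\infty\leq K:=\|u\|_\infty$ by testing the equation against $\varphi=(u_\lambda-K)_+\in\W$ (and the symmetric function for the lower bound). The key observation is that the convection contribution vanishes: with $G_i(t):=\int_K^t f_i(s)\,ds$, the function $G_i(u_\lambda)\mathbf{1}_{\{u_\lambda>K\}}$ lies in $W^{1,1}_0(\Omega)$ thanks to $u_\lambda|_{\partial\Omega}=0$ and $K\geq 0$, so $\int_\Omega\ff(u_\lambda)\cdot\nabla(u_\lambda-K)_+=0$. The $\A$-contribution is nonnegative: its $p$-Laplacian part reduces to $\int_{\{u_\lambda>K\}}|\nabla u_\lambda|^p$, while the fractional part is nonnegative by the standard pointwise-sign argument for $\qfrac$ tested against the nondecreasing truncation $(\cdot-K)_+$. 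What is left is $\int_\Omega(\beta(u_\lambda)-\beta(u))(u_\lambda-K)_+\leq 0$, but monotonicity of $\beta$ makes the integrand nonnegative on $\{u_\lambda>K\}$, so that set has zero measure.

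Next, testing with $\varphi=u_\lambda$, the same primitive argument (now with $F_i(t)=\int_0^t f_i$) gives $\int_\Omega\ff(u_\lambda)\cdot\nabla u_\lambda=0$; using $\beta(u_\lambda)u_\lambda\geq 0$, I obtain the a priori bound
$$
\lambda\bigl(\|\nabla u_\lambda\|_p^p+\mu\|u_\lambda\|_\Wsqz^q\bigr)\leq\int_\Omega\beta(u)u_\lambda\leq C\|u\|_\infty^2|\Omega|,
$$
so $\|u_\lambda\|_\Wpz=O(\lambda^{-1/p})$ and (when $\mu>0$) $\|u_\lambda\|_\Wsqz=O(\lambda^{-1/q})$. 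To get convergence I would then test with $\varphi=u_\lambda-u\in\W$; using the monotonicity of $\A$,
$$
\int_\Omega(\beta(u_\lambda)-\beta(u))(u_\lambda-u)\leq -\lambda\langle\A u,u_\lambda-u\rangle-\lambda\int_\Omega\ff(u_\lambda)\cdot\nabla(u_\lambda-u).
$$
Since $u\in\W\cap L^\infty$ gives $\A u\in\W^*$ and $\ff(u_\lambda)$ is uniformly bounded in $L^\infty$ by the previous step, the preceding energy bound makes the right-hand side $O(\lambda^{1-1/p}+\lambda^{1-1/q})\to 0$. Applying \eqref{beta1} with $K=\|u\|_\infty$ gives $(\beta(u_\lambda)-\beta(u))(u_\lambda-u)\geq C_K(u_\lambda-u)^2$, so $u_\lambda\to u$ in $L^2(\Omega)$, and the local $\alpha$-Hölder continuity of $\beta$ on $[-K,K]$ upgrades this to $v_\lambda\to v$ in $L^1(\Omega)$.

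The main difficulty I anticipate is controlling the convection term in the two key manipulations: the vanishing identities $\int_\Omega\ff(u_\lambda)\cdot\nabla(u_\lambda-K)_+=0$ and $\int_\Omega\ff(u_\lambda)\cdot\nabla u_\lambda=0$ are what allow both the comparison step and the $\lambda$-scaling of the energy estimate to go through, and they rely crucially on $\ff(0)=0$ together with the homogeneous Dirichlet condition. Everything else is a routine consequence of the monotonicity of $\A$ and the assumptions \eqref{beta1}--\eqref{beta2} on $\beta$.
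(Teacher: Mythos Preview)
Your proof is correct and follows the same overall strategy as the paper: approximate $v$ by resolvents $v_\lambda$ of $A$ via Theorem~\ref{exielli}, test the resolvent equation with $u_\lambda-u$, and conclude $v_\lambda\to v$ in $L^1$. The execution, however, differs in a meaningful way. The paper bypasses any $L^\infty$ comparison: it tests directly with $u_\varepsilon-u$ and uses the \emph{convexity} of $J_\A$ (rather than mere monotonicity) to obtain
\[
\int_\Omega (v_\varepsilon-v)(u_\varepsilon-u)+\varepsilon J_\A(u_\varepsilon)\leq \varepsilon\bigl(J_\A(u)+\|f\|_\infty\|\nabla(u_\varepsilon-u)\|_p\bigr),
\]
from which $p>1$ yields a \emph{uniform} (in $\varepsilon$) bound on $\|u_\varepsilon\|_\W$; compactness and \eqref{beta2} then give the $L^1$ convergence. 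Your route instead invests an extra step to prove $\|u_\lambda\|_\infty\leq\|u\|_\infty$ by comparison, then settles for the weaker energy scaling $\|u_\lambda\|_\W=O(\lambda^{-1/p})$, but recovers convergence cleanly via \eqref{beta1} and the local H\"older continuity of $\beta$. What your approach buys is a more transparent identification of the limit (you get $u_\lambda\to u$ in $L^2$ without passing to subsequences), together with an explicit justification of $v_\lambda\in D(A)$ through the $L^\infty$ bound; what the paper's approach buys is brevity and a sharper a~priori estimate on the resolvents.
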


\begin{proof}
Let $v$ such that $\beta^{-1}(v)\in \W$, using \eqref{beta2} we have $v\in L^{p'}(\Omega)\hookrightarrow \W^*$. Let $\varepsilon>0$ we define $v_\varepsilon  \in \W$ such that
$$
v_\varepsilon-v+\varepsilon Av_\varepsilon=0.
$$
Using Theorem \ref{exielli} and \eqref{domA} we have that $v_\varepsilon \in D(A)$ is well defined. We now take the test function $\varphi=\beta^{-1}(v_\varepsilon)-\beta^{-1}(v)$ in the weak formulation, which, by using also the convexity of $J_\A$ and that $f_i\in W^{1,\infty}(\mathbb{R})$, gives:
\begin{equation*}
\begin{split}
\int_\Omega (\beta^{-1}(v_\varepsilon)-\beta^{-1}(v))(v_\varepsilon-v)+&\varepsilon J_\A (\beta^{-1}(v_\varepsilon)),\\
&\leq \varepsilon (J_\A(\beta^{-1}(v))+\|f\|_\infty\|\nabla (\beta^{-1}(v_\varepsilon)-\beta^{-1}(v))\|_p).
\end{split}
\end{equation*}

Now, since $\beta^{-1}(v)\in \W$, $\beta$ increasing and $J_\A(u)\geq \frac{\|\nabla u\|^p_p}{p}$, we get that $\beta^{-1}(v_\varepsilon)$ is bounded in $\W$. And using compact embedding results as well as \eqref{beta2}, we get that $v_\varepsilon \to v$,  as $\varepsilon\to 0$, in $L^1(\Omega)$, up to a subsequence.
\end{proof}

We now show the following convergence result:
\begin{proposition}\label{convlap}
Let $(u_n)_n$ be a bounded sequence in $L^\infty(0,T;\W)$, such that $u_n \overset{*}{\rightharpoonup}u \text{ in }L^\infty(0,T;\W)$ as $n\to\infty$ and that% up to a subsequence and:
\begin{equation}\label{34}
\int_0^T\langle \A u_n, u_n-u \rangle\to 0, \text{ as } n\to\infty.
\end{equation}
Then, up to a subsequence, $\nabla u_n \to \nabla u$ in $(L^p(Q_T))^d$ and $\int_0^T\langle\A u_n - \A u,\varphi\rangle\to 0$ for all $\varphi\in L^1(0,T;\W)$ as $n\to \infty$.
\end{proposition}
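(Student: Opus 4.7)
My approach is the classical Browder--Minty / Leray--Lions scheme, adapted to the mixed local-nonlocal setting. First I would reduce the hypothesis to a Minty-type inequality. Since $u\in L^\infty(0,T;\W)$, a direct consequence of the definition of $\A$ and the continuous embeddings is that $\A u \in L^\infty(0,T;\W^*)\subset L^1(0,T;\W^*)$, so by reflexivity of $\W$ the weak-$*$ convergence $u_n\overset{*}{\rightharpoonup}u$ pairs against $\A u$, giving $\int_0^T\langle\A u,u_n-u\rangle\,d\tau\to 0$. Subtracting this from the hypothesis \eqref{34} yields
\begin{equation*}
\int_0^T\langle \A u_n-\A u,\,u_n-u\rangle\,d\tau\;\longrightarrow\;0.
\end{equation*}
This quantity splits as $I_n+\mu J_n$, where $I_n$ is the local term
\begin{equation*}
I_n:=\int_0^T\!\!\int_\Omega\bigl(|\nabla u_n|^{p-2}\nabla u_n-|\nabla u|^{p-2}\nabla u\bigr)\cdot\nabla(u_n-u)\,dx\,d\tau
\end{equation*}
and $J_n$ is the analogous Gagliardo-type double-integral. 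Both are pointwise nonnegative by monotonicity of $t\mapsto|t|^{r-2}t$, so $I_n\to 0$ and $J_n\to 0$ separately.

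For the local part, since $p>2$, the standard inequality $(|a|^{p-2}a-|b|^{p-2}b)\cdot(a-b)\geq c_p|a-b|^p$ applied with $a=\nabla u_n$, $b=\nabla u$ and integrated over $Q_T$ yields $\|\nabla u_n-\nabla u\|_{L^p(Q_T)^d}^p\leq C I_n\to 0$, which is the first conclusion of the proposition. Extracting a further subsequence with pointwise a.e. convergence of $\nabla u_n$ to $\nabla u$, the sequence $\{|\nabla u_n|^{p-2}\nabla u_n\}$ is equi-integrable in $L^{p'}(Q_T)^d$ (by the uniform $L^p$-bound), so Vitali's theorem gives $|\nabla u_n|^{p-2}\nabla u_n\to |\nabla u|^{p-2}\nabla u$ strongly in $L^{p'}(Q_T)^d$, and hence the local contribution to $\int_0^T\langle \A u_n-\A u,\varphi\rangle\,d\tau$ converges to $0$ for every $\varphi\in L^1(0,T;\Wpz)$.

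The main obstacle is the nonlocal part, because the sign of $q-2$ is not prescribed. Setting $U_n(x,y):=u_n(x)-u_n(y)$ and $U(x,y):=u(x)-u(y)$, in the case $q\geq 2$ the same pointwise monotonicity inequality applied to the Gagliardo kernel gives directly $\|u_n-u\|_{L^q(0,T;\Wsqz)}^q\leq C J_n\to 0$. In the case $1<q<2$ one instead uses Simon's inequality
\begin{equation*}
\bigl(|s|^{q-2}s-|t|^{q-2}t\bigr)(s-t)\;\geq\;c\,\frac{|s-t|^2}{(|s|+|t|)^{2-q}},
\end{equation*}
applied with $s=U_n(x,y)$, $t=U(x,y)$, combined with a Hölder interpolation against the uniform $\Wsqz$-bound of $u_n,u$ (splitting the Gagliardo integrand as $|U_n-U|^q=|U_n-U|^q(|U_n|+|U|)^{(2-q)q/2}(|U_n|+|U|)^{-(2-q)q/2}$ and applying Hölder with exponents $2/q$ and $2/(2-q)$). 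This recovers strong convergence of $u_n$ to $u$ in $L^q(0,T;\Wsqz)$ from $J_n\to 0$ and the a priori bound.

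Finally, extracting a subsequence with $U_n\to U$ a.e.\ on $(0,T)\times\Rn\times\Rn$ and using the uniform bound of $\{|U_n|^{q-2}U_n\}$ in $L^{q'}$ with respect to the measure $d\mu:=|x-y|^{-d-sq}\,dx\,dy\,d\tau$, Vitali's theorem once more yields $|U_n|^{q-2}U_n\to |U|^{q-2}U$ in $L^{q'}(d\mu)$. Consequently
\begin{equation*}
\int_0^T\langle\mu\qfrac u_n-\mu\qfrac u,\varphi\rangle\,d\tau\;\longrightarrow\;0
\end{equation*}
for every $\varphi\in L^1(0,T;\Wsqz)$. Adding the local and nonlocal conclusions gives the second claim for every $\varphi\in L^1(0,T;\W)$, which completes the proof. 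The step I expect to be most delicate is the Hölder interpolation that converts the Simon-type estimate into strong convergence in $\Wsqz$ when $q<2$; the rest is routine Vitali/equi-integrability bookkeeping.
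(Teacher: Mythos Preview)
Your argument is correct and follows the same Minty--monotonicity skeleton as the paper, but the handling of the two pieces differs in emphasis. For the nonlocal term the paper does \emph{not} prove strong $L^q(0,T;\Wsqz)$-convergence at all: it simply observes that the sequence $U_n(t,x,y):=|u_n(x)-u_n(y)|^{q-2}(u_n(x)-u_n(y))\,|x-y|^{-(d+sq)/q'}$ is bounded in $L^\infty(0,T;L^{q'}(\mathbb{R}^{2d}))$, invokes a.e.\ convergence (claimed from ``the compact embedding'') to identify its weak-$*$ limit, and concludes $\int_0^T\langle\qfrac u_n-\qfrac u,\varphi\rangle\to 0$ directly for every $\varphi\in L^1(0,T;\Wsqz)$. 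This sidesteps your case distinction $q\gtrless 2$ entirely. With the nonlocal flux already handled weakly, the paper then uses \eqref{34} and accretivity to isolate the $p$-Laplacian contribution, obtains $\nabla u_n\to\nabla u$ in $L^p(Q_T)^d$ from \eqref{ineg2}, and for the local flux uses the explicit H\"older estimate based on \eqref{ineg1},
\[
\big\||\nabla u_n|^{p-2}\nabla u_n-|\nabla u|^{p-2}\nabla u\big\|_{L^{p'}(Q_T)}\;\le\;C\,\|\nabla(u_n-u)\|_{L^p(Q_T)}\,\big(\|\nabla u_n\|_{L^p(Q_T)}+\|\nabla u\|_{L^p(Q_T)}\big)^{p-2},
\]
rather than Vitali.

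What your route buys is self-containment: the paper's a.e.\ convergence step for $U_n$ is not actually justified by the bare hypotheses of the proposition (weak-$*$ convergence in $L^\infty(0,T;\W)$ gives no time compactness) and tacitly relies on the extra time regularity available in the application. Your derivation of strong $\Wsqz$-convergence from $J_n\to 0$ avoids this and is genuinely cleaner, at the cost of the Simon--H\"older interpolation for $q<2$. One small correction: in your Vitali step for the local flux, equi-integrability in $L^{p'}$ does \emph{not} follow from the uniform $L^p$-bound alone; it follows from the strong $L^p$-convergence of $\nabla u_n$ just established, which (up to a further subsequence) yields an $L^p$-dominating function and hence dominated convergence in $L^{p'}$. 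Alternatively, replace Vitali by the paper's direct H\"older estimate above and avoid the subsequence extraction altogether.
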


\begin{proof}

We first show that $\qfrac u_n \to \qfrac u$ as $n\to \infty$. Since $(u_n)_n$ is bounded in $L^\infty(0,T;\W)$, then
$$ 
U_n: (t,x,y) \mapsto \frac{|u_n(t,x)-u_n(t,y)|^{q-2}}{|x-y|^{(d+sq)/q'}}(u_n(t,x)-u_n(t,y)),
$$
is bounded in $L^\infty(0,T;L^{q'}(\mathbb{R}^{2d}))$ uniformly in $n$. The compact embedding gives almost everywhere convergence and we identify the limit as $n\to\infty$:
$$ 
U_n\overset{*}{\rightharpoonup} \frac{|u(t,x)-u(t,y)|^{q-2}}{|x-y|^{(d+sq)/q'}}(u(t,x)-u(t,y)) \text{ in } L^\infty(0,T;L^{q'}(\mathbb{R}^{2N})),
$$
which gives $\int_0^T\langle \qfrac u_n - \qfrac u,\varphi\rangle\to 0$ as $n\to\infty$ for all $\varphi\in L^1(0,T;\Wsqz)$. 

We now show the convergence of $\Delta_p$. Using the weak convergence of $(u_n)_n$ we have that $\int_0^T\langle \A u ,u_n -u\rangle$ goes to $0$ when $n\to +\infty$, then with \eqref{34}:
\begin{equation*}
\int_0^T\langle \A u_n -\A u ,u_n -u\rangle \to 0 \text{ as }n\to \infty.
\end{equation*}

On the other hand, by definition:
\begin{equation}\label{40}
\langle \A u_n -\A u ,u_n -u\rangle   = \langle -\Delta_p u_n +\Delta_p u ,u_n -u\rangle 
+\mu \langle \qfrac u_n -\qfrac u ,u_n-u\rangle,
\end{equation}
and since $\qfrac$ and $-\Delta_p$ are accretive in $L^2(\Omega)$, 
\begin{equation}\label{24}
\int_0^T\langle -\Delta_p u_n +\Delta_pu ,u_n -u\rangle \to 0 \text{ as } n\to\infty.
\end{equation}
Combining \eqref{24} with \eqref{ineg2} we obtain $\nabla u_n \to \nabla u $ in $(L^p(Q_T))^d $. 

We next show that $|\nabla u_n|^{p-2}\nabla u_n\to |\nabla u|^{p-2}\nabla u$ in $(L^{p'}(Q_T))^d$. Indeed, using \eqref{ineg1} and the Hölder inequality, we have:
\begin{equation*}
\begin{split}
& \int_{Q_T} \left||\nabla u_n| ^{p-2} \nabla u_n-|\nabla u| ^{p-2} \nabla u\right|^{\frac{p}{p-1}}dx, \\
& \leq C\int_{Q_T} |\nabla (u_n-u)|^{\frac{p}{p-1}}(|\nabla u_n|+|\nabla u|)^{\frac{p(p-2)}{p-1}}dx ,\\
& \leq C\left\||\nabla u_n-\nabla u|^{\frac{p}{p-1}}\right\|_{L^{p-1}(Q_T)}\left\|(|\nabla u_n|+|\nabla u|)^\frac{p(p-2)}{p-1}\right\|_{L^{\frac{p-1}{p-2}}(Q_T)}.
\end{split}
\end{equation*}
Since $(u_n)_n$ is bounded in $L^\infty(0,T;\Wpz)$ we obtain 
\begin{equation}\label{42}
|\nabla u_n|^{p-2}\nabla u_n \to |\nabla u|^{p-2}\nabla u \text{ in }(L^{p'}(Q_T))^d .
\end{equation}
Finally, using the $L^\infty(0,T;\Wpz)$ bound, the weak convergence and the identification given by \eqref{42} we get $\int_0^T\langle -\Delta_p u_n + \Delta_p u,\varphi\rangle\to 0$ as $n\to\infty$ for all $\varphi\in L^1(0,T;\Wpz)$.
\end{proof}

\section{Proof of the existence results}\label{sec2}
In this section we will prove Theorems \ref{para1}, \ref{prop:uniqueness} and \ref{para2}, in this order.

\subsection{Proof of Theorems \ref{para1} and \ref{prop:uniqueness}}

We assume in this section the hypotheses of Theorem \ref{para1}. In particular using that $\beta$ is an odd function we have $\beta(|u|)=|\beta(u)|$. As we shall solve the parabolic problem using a discretization scheme, we first solve the elliptic problem:
\begin{equation}\label{pbe1}
    \begin{cases}
     \frac{1}{\dt} \beta(u)+ \A  u= h+\Div (\overset{\to}{f}(u)) \quad \text{in} \;\Omega,\\
      u=0 \quad \text{in} \; \Rn\backslash \Omega.
    \end{cases}\
\end{equation}
We have the following:
\begin{lemma}\label{elli2}
Let $h\in L^\infty(\Omega)$ and $ \dt>0$, then, there exists a weak solution of \eqref{pbe1} $u\in \W$ such that $\|\beta(u)\|_\infty\leq \dt\|h\|_{\infty}$.
\end{lemma}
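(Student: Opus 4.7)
The existence part is essentially a direct consequence of Theorem~\ref{exielli}. Since $h\in L^\infty(\Omega)\hookrightarrow \W^*$, I would apply Theorem~\ref{exielli} with parameter $\lambda=\dt$ and source $\dt\,h$: this produces $u\in\W$ solving $\beta(u)+\dt\,\A u=\dt\,h+\dt\,\Div\ff(u)$, which is exactly \eqref{pbe1} after dividing by $\dt$. So the real work is the $L^\infty$ estimate.

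For the bound, I would use a Stampacchia truncation. Set $M=\dt\,\|h\|_\infty$ and $k=\beta^{-1}(M)\geq 0$, and test the weak formulation of \eqref{pbe1} against $\varphi=(u-k)_+$. Since $k\geq 0$ and $u=0$ outside $\Omega$, the truncation $(u-k)_+$ vanishes outside $\Omega$ too, hence belongs to $\W$. I would then check term by term:
\begin{itemize}
\item The $p$-Laplacian part contributes $\int_{\{u>k\}}|\nabla u|^p\geq 0$.
\item The fractional part $\langle \qfrac u,(u-k)_+\rangle\geq 0$ because the map $t\mapsto (t-k)_+$ is non-decreasing, so the double integrand $|u(x)-u(y)|^{q-2}(u(x)-u(y))\bigl((u(x)-k)_+-(u(y)-k)_+\bigr)/|x-y|^{d+sq}$ is pointwise nonnegative.
\item The convection term is \emph{zero}: writing $G_i(t)=\int_k^{\max(t,k)}f_i(s)\,ds$, one has $\partial_{x_i}G_i(u)=f_i(u)\mathbb{1}_{\{u>k\}}\partial_{x_i}u=f_i(u)\partial_{x_i}(u-k)_+$, so $\int_\Omega \ff(u)\cdot\nabla(u-k)_+ =\sum_i\int_\Omega\partial_{x_i}G_i(u)$. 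Since $G_i(u)\in W^{1,p}_0(\Omega)$ (it vanishes where $u\leq k$, and in particular outside $\Omega$ because $k\geq 0$), each of these integrals is $0$ by the divergence theorem. This is the same ingredient used in the proof of Theorem~\ref{thacc}.
\end{itemize}

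Combining these three observations with the weak formulation yields
$$\tfrac{1}{\dt}\int_\Omega \beta(u)(u-k)_+\,dx\;\leq\; \int_\Omega h\,(u-k)_+\,dx\;\leq\;\|h\|_\infty\int_\Omega (u-k)_+\,dx.$$
Now on the set $\{u>k\}$ one has $\beta(u)>\beta(k)=M$ strictly, because $\beta$ is strictly increasing; if $|\{u>k\}|>0$ this gives $\int_\Omega\beta(u)(u-k)_+\,dx>M\int_\Omega(u-k)_+\,dx$, which combined with the previous chain and $M=\dt\|h\|_\infty$ is a contradiction. Hence $u\leq k$ a.e. A symmetric argument, testing with $\varphi=(u+k)_-$ and using the oddness of $\beta$ (which gives $\beta^{-1}(-M)=-k$), yields $u\geq -k$ a.e. Together, $\|\beta(u)\|_\infty\leq M=\dt\,\|h\|_\infty$.

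The only delicate step is the vanishing of the convection term: it is essential that $k\geq 0$ (so that the truncation is compactly supported in $\Omega$) and that the antiderivative trick of Theorem~\ref{thacc} applies here with $\ff\in (W^{1,\infty})^d$; once this is in place the rest is a routine Stampacchia argument.
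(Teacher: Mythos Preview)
Your Stampacchia argument is sound, and the antiderivative trick for killing the convection term is correct and even a bit cleaner than what the paper does. However, there is a genuine gap at the very first step: you apply Theorem~\ref{exielli} directly to $\ff$, but that theorem is proved under the standing hypothesis of Section~\ref{acc} that each $f_i\in W^{1,\infty}(\mathbb{R})$ (global Lipschitz and bounded). Lemma~\ref{elli2} sits in Section~\ref{sec2}, where the standing assumption is only $f_i\in W^{1,\infty}_{loc}(\mathbb{R})$. With merely local regularity, the coercivity estimate and the Schaefer-set bound in the proof of Theorem~\ref{exielli} (both of which use $\|\ff\|_{L^\infty}$) are unavailable, so you do not yet have a solution to which your truncation argument can be applied. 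The same issue affects your chain rule step $\partial_{x_i}G_i(u)=f_i(u)\mathbb{1}_{\{u>k\}}\partial_{x_i}u$: until you know $u$ is bounded, $G_i$ is not globally Lipschitz and $G_i(u)$ need not lie in $W^{1,p}_0(\Omega)$.

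The paper closes this gap by \emph{first} truncating the convection: it sets $R=\beta^{-1}(\dt\|h\|_\infty)$, replaces each $f_i$ by $\tilde f_i\in W^{1,\infty}(\mathbb{R})$ equal to $f_i$ on $[-R,R]$ and constant outside, and applies Theorem~\ref{exielli} to $\ff_R$ to obtain $u_R$. The $L^\infty$ bound is then proved for $u_R$; since $\tilde f_i$ is constant on $\{|t|>R\}$ one has $\Div\ff_R(u_R)=0$ on $\{|u_R|>R\}$, which makes the convection term drop out immediately when testing with $(u_R-R)_+$. Once $|u_R|\le R$ is established, $\ff_R(u_R)=\ff(u_R)$ and $u_R$ solves the original problem. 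Your antiderivative argument would work equally well for the truncated $\ff_R$ in place of the paper's ``$\Div\ff_R=0$ outside $[-R,R]$'' observation; the truncation itself is the missing ingredient.
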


\begin{proof}
We define $R= \beta^{-1}(\dt \|h\|_{L^\infty(\Omega)})$ and for $i=1,\dots,d$, {$\tilde{f}_i\in W^{1,\infty}(\mathbb{R})$} such that:
\begin{align*}
\tilde{f}_i(t)= \left\{ \begin{array}{lr}  f_i(R) \text{ if } t\geq R, \\
f_i(t) \text{ if } |t|\leq R,\\
 f_i(-R) \text{ if } t\leq -R, \\
\end{array}\right.
\end{align*}
and $\|\tilde{f}_i\|_{L^\infty(\mathbb{R})}\leq \|f_i\|_{L^\infty(-R,R)}$. We also note that $\tilde{f}_i$ is Lipschitz. We set $\ff_R=( \tilde f_1,\dots,\tilde f_d)$. Using Theorem \ref{exielli} we have a weak solution $u_R$ of \eqref{pbe1} with $\ff$ replaced by $\ff_R$.

We now show that $|u_R|\leq R$. Indeed, we write,
$$
\frac{1}{\dt} \beta(u_R)-\|h\|_{\infty}+\A  u_R=h+\Div (\overset{\to}{f}_R(u_R))-\|h\|_{\infty},
$$
and take the test function $ \varphi =\dt(u_R-R)_+\in \Wpz$. Observe that,
$$
\langle \A u_R,(u_R-R)_+\rangle \geq 0 ,
$$
and since $\dt\|h\|_{\infty}=\beta(R)$, we can write
\begin{equation*}
\int_\Omega (\beta(u_R)-\beta(R))(u_R-R)_+\leq \int_\Omega \dt (u_R-R)_+(h+\Div (\overset{\to}{f_R}(u_R))-\|h\|_{\infty}).
\end{equation*}
Now, we observe that $\Div \ff_R=0$ on $[R,\infty)$, and this gives
\begin{equation*}
\int_\Omega (\beta(u_R)-\beta(R))(u_R-R)_+ 
\leq \int_\Omega  \dt(u_R-R)_+( |h|-\|h\|_{\infty})
 \leq 0.
\end{equation*}
Finally, since $\beta$ is increasing, we have $u_R\leq R$.\\
Similarly, we can add $\|h\|_{\infty}$ to both sides of the equation and take the test function $(u_R+R)_-$, this finally gives $|u_R|\leq R$ and by the de definition of $R$, $|\beta(u_R)|\leq \dt (\|h\|_{\infty})$. Then, we have that $u_R$ is in fact a solution of \eqref{pbe1}, thus we write $u=u_R$, and satisfies $\|\beta(u)\|_\infty\leq \dt \|h\|_{\infty}$. 
\end{proof}

We are now ready to solve the parabolic problem. Note that the next proof is using a discretization method similar to that in e.g. \cite{constantin2024existence}.
\begin{proof}[Proof of Theorem \ref{para1}]

We divide the proof into 4 steps. In steps 1, 2 and 3 of the proof we solve: 
\begin{equation}\label{pbpr}\tag{$P_R$}
    \begin{cases}
     \partial_t \beta(u)+ \A  u= \Div (\overset{\to}{f}(u))+g_R(t,x,u) \quad \text{in} \;Q_T,\\
      u=0 \quad \text{in} \;(0,T)\times(\Rn\backslash \Omega) ,\\
      u(0)=u_0 \text{ in } \Omega,
    \end{cases}\
\end{equation}
where 
\begin{equation}\label{gr}\tag{$g_R$}
g_R(t,x,\theta)=g(t,x,sgn(\theta)\min(|\theta|,R)) \text{ for }R>0.
\end{equation}
In step 4 we show that the solution is a mild solution, as well as that it satisfies $ |u|\leq R$, and in fact $g_R(u)=g(u)$.

\vspace{0.3cm}
\textbf{\underline{Step 1: Time-discretization scheme}}

Let $T>0$ and, $N\in \mathbb{N}^*$, we set $\dt=\frac{T}{N}$, $t_n=n\dt$. And we introduce, for $\gamma\in [1,+\infty)$,  the linear continuous operator $T_\dt$ from $L^\gamma(Q_T)$ to itself, defined by  
\begin{equation*}
T_\dt \psi(t,x):=\displaystyle{\fint^{t_n}_{t_{n-1}}}\psi(\tau,x)d\tau\ \text{ for } (t,x)\in [t_{n-1},t_n)\times \Omega.
\end{equation*} 
We let $u^0=u_0$ and for $1\leq n\leq N$, we define $u^n\in \Wpz$ as the weak solution of
\begin{equation*}
    \begin{cases}
     \frac{1}{\dt} \beta(u^n)+ \A  u^n=g^n+\Div (\overset{\to}{f}(u^n))+\frac{1}{\dt} \beta(u^{n-1}) \quad \text{in} \;\Omega,\\
      u^n=0 \quad \text{in} \; \Rn\backslash \Omega,
    \end{cases}\
\end{equation*}
where $g^n=g_\dt (t_{n-1})$ for $g_\dt=T_\dt g_R(\cdot,\cdot,u^{n-1})$. %Let $T>0$ and, $N\in \mathbb{N}^*$, we set $\dt=\frac{T}{N}$, $t_n=n\dt$, for $1\leq n\leq N$, we define $u^n\in \Wpz$:
%\begin{equation*}
%    \begin{cases}
%     \frac{1}{\dt} \beta(u^n)+ \A  u^n=g^n+\Div (\overset{\to}{f}(u^n))+\frac{1}{\dt} \beta(u^{n-1}) \quad \text{in} \;\Omega,\\
%      u^n=0 \quad \text{in} \; \Rn\backslash \Omega,
%    \end{cases}\
%\end{equation*}
%where $g^n(x):=\fint^{t_n}_{t_{n-1}} g_R(\tau ,x,u^{n-1}(x)) d\tau$.

By Theorem \ref{elli2} the sequence $(u^n)_n$ is well defined. With it, we define the following functions over $[0,T]$:
\begin{itemize}
    \item  $u_\dt =u^n, \text{ on }[t_{n-1},t_n[$,
    \item $ \widetilde \beta(u_\dt)=\dfrac{t-t_{n-1}}{\dt}(\beta(u^n)-\beta(u^{n-1}))+\beta(u^{n-1})\text{ on }[t_{n-1},t_n[$,
    \item $ \tilde{u}_\dt=\dfrac{t-t_{n-1}}{\dt}(u^n-u^{n-1})+u^{n-1}\text{ on }[t_{n-1},t_n[$.
\end{itemize}
Then
$$
\partial_t \widetilde \beta(u_\dt)+ \A u_\dt=g_\dt+\Div(\overset{\to}{f}(u_\dt)),
$$
for all $t\in (0,T)$ in the weak sense.% of Theorem \ref{elli}. 

\vspace{0.3cm}

\textbf{\underline{Step 2: A priori Estimates}}

We first show that $u_\dt$ is uniformly bounded. Using Lemma~\ref{elli2} for $h=g^n+\frac{1}{\dt}\beta(u^{n-1})$, we have that,
$$
\|\beta(u^n)\|_\infty\leq \dt \|g^n\|_{\infty}+\|\beta(u^{n-1})\|_\infty,
$$
which gives
$$
\|\beta(u_\dt)\|_\infty\leq \|\beta(u_0)\|_\infty+\sum^N_{n=1} \dt \|g^n\|_{\infty}.
$$
The assumption \eqref{growcond} implies $|g^n|\leq c_g(1+R^q)$, which applied to the above inequality and with the a.e. convergence gives
\begin{equation}\label{bound}
\|\beta(u)\|_\infty  \leq  \|\beta(u_0)\|_\infty+ T c_g(1+R^q).
\end{equation}

We notice that using this uniform bound \eqref{bound} and \eqref{beta1}, we can say that there exists $C>0$ such that for all $n,m$:
\begin{equation}\label{1}
|\beta(u^n)-\beta(u^m)|\geq C |u^n-u^m|,
\end{equation}
where $C$ is independent of $\dt$.

We now take the test function $\varphi=u^n-u^{n-1}$ in the weak formulation of each $u^n$ problem, thus we have
\begin{equation}\label{26}
\begin{split}
\int_\Omega & \frac{1}{\dt} (\beta(u^n)-\beta(u^{n-1}))(u^n-u^{n-1})+\langle \A  u^n,u^n-u^{n-1}\rangle\\
&= \int_\Omega (g^n+\Div (\overset{\to}{f}(u^n)))(u^n-u^{n-1}).
\end{split}
\end{equation}
Using Young's inequality, \eqref{1} and the convexity of $J_\A$, we then get
\begin{equation*}
\begin{split}
C_0&\frac{1}{\dt}\|u^n-u^{n-1}\|^2_{L^2(\Omega)}+J_\A(u^n)\\
&\leq J_\A(u^{n-1})+C_1\dt(\|\Div\overset{\to}{f}(u^n)\|_{L^2(\Omega)}^2+ \|g^n\|_{L^2(\Omega)}^2).
\end{split}
\end{equation*}
Now passing, in this inequality, to the sum for any $m\leq N$ and the definition of $g_R$ \eqref{gr}, gives:
\begin{equation*}
\begin{split}
C_0&\frac{1}{\dt}\sum_{n=1}^{m}\|u^n-u^{n-1}\|^2_{L^2(\Omega)}+J_\A(u^{m}) \\
&\leq J_\A(u^{0})+\dt\sum_{n=1}^{m}\left(C_1\|\Div\overset{\to}{f}(u^n)\|_{L^2(\Omega)}^2+C_R \right).
\end{split}
\end{equation*}
Since $m$ is arbitrary above, and using $\frac{1}{p}\|v\|^p_\Wpz \leq J_\A (v)$, we arrive at
\begin{equation}\label{12}
\begin{split}
C_0& \|\partial_t \tilde{u}_\dt\|^2_{L^2(Q_T)}+\frac{1}{p}\|u_\dt\|^p_{L^\infty(0,T;\Wpz)}\\
&\leq J_\A(u_0)+C_RT +C_1\|\Div\overset{\to}{f}(u_\dt)\|_{L^2(Q_T)}^2.
\end{split}
\end{equation}

The last term on the right handside of \eqref{12} can be further estimated from above, using Theorem \ref{chain} to get $\partial_{x_j}(f_i(u))=f_i'(u)\partial_{x_j} u$ even if $f_i\in W^{1,\infty}_{loc}(\mathbb{R})$, and by using the uniform bound \eqref{bound}, then
\begin{equation}\label{14}
\|\Div(\overset{\to}{f}(u_\dt)\|_{L^2(Q_T)}^2\leq C(1+\|\nabla u_\dt\|_{L^2(Q_T)}^2),
\end{equation}
where $C$ is independent of $\dt$.

%\vspace{0.3cm}

Combining \eqref{12} and \eqref{14}, we then obtain

\begin{equation*}
\|u_\dt\|^p_{L^\infty(0,T;\Wpz)}\leq  c \left(1+\|\nabla u_\dt\|_{L^2(Q_T)}^2\right)\leq C\left(1+\|u_\dt\|^2_{L^\infty(0,T;\Wpz)}\right),
\end{equation*}
and the assumption $p>2$ gives that $u_\dt$ is bounded in $L^\infty(0,T;\W)$. Now, coming back to \eqref{14}, this implies,
\begin{equation}\label{13}
\|\Div(\overset{\to}{f}(u_\dt)\|_{L^2(Q_T)}^2\leq C.
\end{equation}

Finally, with \eqref{26}, \eqref{12} and \eqref{13} we have that
\begin{equation}\label{2}
\|\partial_t \tilde{u}_\dt\|^2_{L^2(Q_T)}+\|J_\A(u_\dt)\|_{L^\infty(0,T)}\leq \tilde{C},
\end{equation}
and
\begin{equation}\label{7}
 \sum^{N}_{n=1} \frac{1}{\dt} \int_\Omega (\beta(u^{n})-\beta(u^{n-1}))(u^{n}-u^{n-1})\leq \tilde{C},
\end{equation}
where $\tilde{C}$ is independent of $\dt$.

\vspace{0.3cm}

\textbf{\underline{Step 3: Convergence}}\\
We now employ the estimates obtain in the previous step to obtain a limit and to show that we can pass to the limit in the different terms of the equation.

We start by obtaining the convergence of the solution sequences. Using \eqref{2}, we have that, up to a subsequence, there exists $u_1$, $u_2\in L^\infty(0,T;\W)$ such that
\begin{equation}\label{3}
u_\dt , \widetilde{u}_\dt \overset{*}{\rightharpoonup}u_1, u_2 \text{ in }L^\infty(0,T;\W) \text{ as } \dt \to 0,
\end{equation}
and
\begin{equation}\label{4}
\partial_t \tilde{u}_\dt \rightharpoonup \partial_t u_2  \text{ in } L^2(Q_T) \text{ as } \dt \to 0.
\end{equation}

With \eqref{2}, and the Aubin-Simon Lemma \cite{simon2}, we obtain
$$
\tilde{u}_\dt \to u_2 \text{ in } C([0,T];L^2(\Omega))  \text{ as } \dt \to 0.
$$
This, together with the interpolation inequality and \eqref{bound}, imply that
\begin{equation}\label{27}
\tilde{u}_\dt \to u_2 \text{ in } C([0,T];L^\gamma(\Omega)), \; \forall \gamma \in [1,\infty) \text{ as } \dt \to 0.
\end{equation}
Now, from $(\partial_t \widetilde{u}_\dt)_\dt$ being bounded in $L^2(Q_T)$, we also get that
\begin{equation}\label{11}
\begin{split}
C\geq \sum_{n=1}^{N} \dt \int_\Omega \left( \frac{u^n-u^{n-1}}{\dt}\right)^2 & \geq \max_n\int_\Omega(u^n-u^{n-1})^2\dt^{-1} \\
& \geq \dt^{-1} \sup_{t\in (0,T)}\|(\tilde{u}_\dt- u_\dt)(t)\|_{L^2(\Omega)}^2
\end{split}
\end{equation}
from which we deduce that $u_1=u_2$ a.e.. From now on, we just use the notation $u$ for $u_1$ and $u_2$. 

Using \eqref{27} combined with \eqref{11}
$$
u_\dt,u_\dt(\cdot-\dt) \to u \text{ in }L^\infty(0,T;L^2(\Omega)) \text{ as } \dt \to 0, 
$$
and using again the interpolation inequality we deduce that for any $\gamma\in [1, +\infty)$
\begin{equation}\label{18}
u_\dt,u_\dt(\cdot-\dt) \to u \text{ in } L^\infty(0,T;L^\gamma(\Omega)) \text{ as } \dt \to 0.
\end{equation}

We can now pass to the limit in the source term. First, by applying the dominated convergence theorem, we have that for any $\gamma\in [1,+\infty)$
\begin{equation}\label{35}
g_R(\cdot,\cdot,u_\dt(\cdot-\dt)) \to g_R(\cdot,\cdot,u) \text{ in } L^\gamma(Q_T) \mbox{ as }  \dt\to 0.
\end{equation}

Using that the operator $T_\dt$ is continuous in $L^\gamma(Q_T)$, that $g_\dt=T_\dt g_R(\cdot,\cdot,u^{n-1})$ and that $T_\dt \psi$ tends to $\psi \text{ in } L^{\gamma}(Q_T)$ as $\dt \to 0$, we get that 
 \begin{equation*}
\begin{split}
\|g_\dt -&g_R(\cdot,\cdot,u)\|_{L^\gamma(Q_T)}\\
&\leq \|g_\dt -T_\dt g_R(\cdot,\cdot,u)\|_{L^\gamma(Q_T)}+ \|g_R(\cdot,\cdot,u) -T_\dt g_R(\cdot,\cdot,u) \|_{L^\gamma(Q_T)}\\ 
& \leq \|T_\dt\|\|g_R(\cdot,\cdot,u^{n-1}) - g_R(\cdot,\cdot,u)\|_{L^\gamma(Q_T)}+ \|g_R(\cdot,\cdot,u) -T_\dt g_R(\cdot,\cdot,u) \|_{L^\gamma(Q_T)}.
\end{split}
\end{equation*}
Thus, for any finite $\gamma\geq 1$, $g_\dt$ tends to $g_R(\cdot,\cdot , u)$ in $L^\gamma(Q_T)$ as $\dt\to 0$.

%\vspace{0.3cm}

Now we prove the convergence for $(\beta(u_\dt))_{\dt}$. We have \eqref{18} and that $\beta$ is locally Hölder continuous, then for $\gamma\in [1,+\infty)$ 
\begin{equation}\label{22}
\beta(u_\dt)\to \beta (u)\text{ a.e. and in }L^\infty(0,T;L^\gamma(\Omega)) \text{ as } \dt \to 0.
\end{equation}

Again, since $\beta$ is $\alpha$-Hölder continuous and using \eqref{7}, we obtain
\begin{equation*}
C  \geq \sum^{N}_{n=1} \frac{1}{\dt} \int_\Omega (\beta(u^{n})-\beta(u^{n-1}))(u^{n}-u^{n-1})
\geq \max_n \frac{1}{\dt} \int_\Omega |\beta(u^{n})-\beta(u^{n-1})|^{ \frac{1}{\alpha}+1},
\end{equation*}
which implies
\begin{equation}\label{21}
\|\widetilde \beta (u_\dt)-\beta(u_\dt)\|_{L^\infty(0,T;L^{\frac{1}{\alpha}+1}(\Omega))}\to 0 \text{ as } \dt\to 0.
\end{equation}

On the other hand, from \eqref{22}, \eqref{21} and the interpolation inequality, we have that for any $\gamma \in (1,+\infty)$
\begin{equation}\label{16}
\widetilde \beta (u_\dt)\to \beta (u)\text{ in }C([0,T];L^\gamma(\Omega)) \text{ as } \dt \to 0.
\end{equation}

For the divergence term, we use that $\overset{\to}{f}$ is Lipschitz, then we can simply conclude that, for any $\gamma\in[1,+\infty)$,
\begin{equation}\label{6}
\overset{\to}{f}(u_\dt) \to \overset{\to}{f}(u) \text{ in } L^\infty(0,T;L^\gamma(\Omega)) \text{ as } \dt \to 0.
\end{equation}

We now want to prove the convergence of $(\A(u_\dt))_\dt$ using Proposition~\ref{convlap} and the weak formulation of the equation with the test fucntion $\varphi=u_\dt - u$.

 %\begin{equation}\label{28}
%\int_0^t \langle\qfrac(u_\dt),\varphi\rangle \to \int_0^t  \langle \qfrac u,\varphi\rangle \text{ as } \dt \to 0.
%\end{equation}
First, we show that $\int_{Q_T} \partial_t\widetilde\beta(u_\dt)(u_\dt-u)\to 0$ as $\dt\to 0$. Indeed, by Lemma~\ref{ipp}, we can write
\begin{equation}\label{25}
\begin{split}
\int_{Q_T} \partial_t\widetilde\beta(u_\dt)(u_\dt-u)&=\int_{Q_T} \partial_t\widetilde\beta(u_\dt)(u_\dt-\tilde{u}_\dt+\tilde{u}_\dt-u),\\
&=-\int_{Q_T} \widetilde\beta(u_\dt)\partial_t(\tilde{u}_\dt-u)+\Big[\int_\Omega\widetilde\beta(u_\dt)(\tilde{u}_\dt-u)\Big]_0^T\\
&\hspace{0.4cm}+\int_{Q_T} \partial_t\widetilde\beta(u_\dt)(u_\dt-\tilde{u}_\dt).
\end{split}
\end{equation}
Now, using that $|u_\dt-\tilde{u}_\dt|\leq |u^n-u^{n-1}|$ on $[t_{n-1},t_n)$ and \eqref{7}, we get for the last term in \eqref{25} that 
\begin{equation}\label{19}
%\begin{split}
%\int_{Q_T} \partial_t\widetilde\beta(u_\dt)(u_\dt-\tilde{u}_\dt)&\leq \sum_{n=1}^N \dt \int_\Omega \frac{|\beta(u^n)-\beta(u^{n-1})|}{\dt}|u^n-u^{n-1}|\\
 %& \leq  C\dt.
%\end{split}
\int_{Q_T} \partial_t\widetilde\beta(u_\dt)(u_\dt-\tilde{u}_\dt)\leq \sum_{n=1}^N \dt \int_\Omega \frac{|\beta(u^n)-\beta(u^{n-1})|}{\dt}|u^n-u^{n-1}|
 \leq  C\dt.
\end{equation}
This means, that combining \eqref{4}, \eqref{16}, \eqref{19} and \eqref{25} we get
\begin{equation}\label{20}
\int_{Q_T} \partial_t\widetilde\beta(u_\dt)(u_\dt-u)\to 0 \text{ as } \dt \to 0.
\end{equation}

We then test the weak formulation with the function $\varphi =u_\dt-u$. This implies, using  \eqref{3}, \eqref{18},  \eqref{35}, \eqref{6} and \eqref{20}, that
\begin{equation}\label{23}
\begin{split}
\int_0^T\langle \A u_\dt ,u_\dt -u\rangle=&-\int_{Q_T} \partial_t\widetilde\beta(u_\dt)(u_\dt-u)\\
&+\int_{Q_T}g_\dt(u_\dt-u)-\int_{Q_T}\overset{\to}{f}(u_\dt(t-\dt)).\nabla(u_\dt-u)\\
&\longrightarrow 0.
\end{split}
\end{equation}

We are now in the position to apply Proposition \ref{convlap}, to get
\begin{equation}\label{41}
\nabla u_\dt \to \nabla u \text{ in }(L^p(Q_T))^d \text{ as } \dt \to 0,
\end{equation}
and, for all $\varphi \in L^1(0,T;\W)$, 
$$
\int_0^T \langle \A u_\dt , \varphi \rangle \to \int_0^T \langle \A u, \varphi \rangle \text{ as } \dt \to 0.
$$

We can now pass to the limit $\dt \to 0$ in the equation. We take $\varphi \in L^1(0,T;\W)\cap H^1(0,T;L^2(\Omega))$, and using Lemma \ref{ipp} we finally get,
\begin{equation*}
\begin{split}
-&\int_0^t \int_\Omega \widetilde\beta(u_\dt) \partial_t \varphi+\Big[\int_\Omega\varphi \widetilde\beta(u_\dt)\Big]^t_0+\int_0^t\langle \A u_\dt,\varphi\rangle\\
&=\int_0^t\int_\Omega (-\overset{\to}{f}(u_\dt). \nabla\varphi)+g_\dt \varphi,
\end{split}
\end{equation*}

for all $\dt$ and $t\leq T$. And taking $\dt\to 0$ we obtain a solution $u\in X_T$ of \eqref{pbpr} satisfying the variational formulation for all $\varphi \in L^1(0,T;\W)\cap H^1(0,T;L^2(\Omega))$ and such that $u(0)=u_0$ a.e.. 

%\vspace{0.3cm}

\textbf{\underline{Step 4: $u$ is a weak-mild solution of \eqref{pbp}}}

We first show that $u$ is weak-mild solution of \eqref{pbp} for $g_R$. Let $\varepsilon>0$, using $g_\dt \to g_R$ and the results of the previous steps we have that for $\dt$ small enough, $\beta (u_\dt)$ is an $\varepsilon$-approximation solution (see Definition \ref{defapp}). Then $\beta(u)$ is a mild solution for the data $g_R(t,x,u)$.

%\vspace{0.3cm}

%We now that $g_R=g$ under some condition on $T.$ 
By Step 2 and \eqref{growcond} we have that
$$
\|\beta(u)\|_\infty  \leq  \|\beta(u_0)\|_\infty+ T c_g(1+R^{q_g}).
$$
Using that $\beta$ is increasing, we can choose $R$ big enough as well as $T$ small enough, to get 
$$
\|u\|_\infty \leq \beta^{-1}( \|\beta(u_0)\|_\infty+ T c_g(1+R^{q_g}))\leq R,
$$
and this gives $|u|\leq R$, and hence $g_R(t,x,u)=g(t,x,u)$. Thus we have that $u$ is weak-mild solution of \eqref{pbp}. 

%\vspace{0.3cm}

We now show that if \eqref{g2} holds then we have a global weak-mild solution. Using \eqref{g2} for all $T\in(0,C_g^{-1})$ and $u_0\in \W\cap L^\infty(\Omega)$, there exists $R>0$ big enough such that,
$$
\|\beta(u_0)\|_\infty + T \sup_{[0,R]} g \leq \beta(R),
$$
which gives a solution. 

We set $T>0$, then we have a solution $u_1$ over $Q_T$ and $u_1(T)\in L^\infty(\Omega) \cap \W$. By induction we build $(u_n)_n$ weak-mild solution of \eqref{pbp} such that $u:= u_{n+1}(\cdot-nT) $ on $[nT,(n+1)T]\times \Omega$ is a global weak-mild solution.
\end{proof}

\begin{remark}
As we get the solution $|u|\leq C$ where $C$ is independent of $\ff$, we can take $\ff_C\in W^{1,\infty}(\mathbb{R})$ such that $\ff_C=\ff$ in $B(0,C)$ and we can work using the results of Section \ref{acc}.
\end{remark}

%\vspace{0.3cm}

\begin{remark}\label{rk}
We have that $A$ is accretive (via Theorem \ref{thacc}) and $\beta(u_0)\in \overline{D(A)}^{L^1(\Omega)}$ (via Lemma \ref{lemacc}), which gives by \cite[Th. 4.1]{barbu2} uniqueness of the mild solution for $g:=h\in L^\infty(Q_T)$. Thus all weak-mild solution are the limit of the discretized solutions.
\end{remark}

%\begin{proposition}
%Let $T>0$ and let $u$, $v$ be two weak-mild solutions of \eqref{pbp} with right-hand side respectively $g=h_1,h_2\in {L^\infty(Q_T)}, $ and initial data $u_0,\,v_0\in { \W\cap L^{\infty}(\Omega)}$. Then, we have 
%
%\begin{equation}\label{L1contr}
%\sup_{[0,T]}\|\beta(u)-\beta(u)\|_{L^1(\Omega)}\leq \|\beta(u_0)-\beta(v_0)\|_{L^1(\Omega)}+\int_0^T\|h_1-h_2\|_{L^1(\Omega)}\,dt.
%\end{equation}
%\end{proposition}

\begin{proof}[Proof of Theorem \ref{prop:uniqueness}]
We first prove the $L^1$-contraction properties for $h_1$, $h_2\in L^\infty(Q_T)$. Using Remark \ref{rk}, we can consider the solutions obtained by the proof Theorem \ref{para1}. Let $u_{\dt}$, $v_{\dt}$ be the discretized solutions defined associated with $(h_1,u_0)$ and $(h_2,v_0)$, respectively.

From the $L^1$-accretivity of $A$, we get:
\begin{equation*}
\begin{split}
\|\beta(u^{n})-\beta(v^{n})\|_{L^1(\Omega)} & \leq \|\beta(u^{n-1})-\beta(v^{n-1})\|_{L^1(\Omega)}+\dt\|h_1^n-h_2^n\|_{L^1(\Omega)},\\
& \leq \|\beta(u_0)-\beta(v_0)\|_{L^1(\Omega)} +\| (h_1)_\dt-  (h_2)_\dt\|_{L^1(Q_T)}.
\end{split}
\end{equation*}

Passing to the limit as $\dt\to0$, we find \eqref{L1contr} for $h_1$ and $h_2\in L^\infty(Q_T)$ as source terms. On the other hand, using the uniqueness of the weak-mild solution for a fixed $h$ we just have to take $h_1=g_1(u)$ and $h_2=g_2(v)$ to obtain \eqref{L1contr}.

%\vspace{0.3cm}

Uniqueness of the weak-mild solution when \eqref{gLip} holds, now follows from \eqref{L1contr} and Gronwall's Lemma (see e.g. \cite[Lemma 4.2.1, p. 55]{c&h}).

\end{proof}

\subsection{Proof of Theorem \ref{para2}}

We assume in this section the hypotheses of Theorem \ref{para2}. That is, let $k>0$, $u_0\in \W$, $u_0\in [0,k]$ a.e. and $f_i\in W^{1,\infty}([0,k])$ and $f_i=0$ in $\mathbb{R}\backslash ]0,k[$. 

We notice that in this section the condition on $\beta$ being odd is not necessary.
Instead, using \eqref{beta1} we have that $\lambda\beta(t)-g(t)$ is nondecreasing on $\mathbb{R}$ for $\lambda>0$ big enough. As we use a time discretization method where the role of $\lambda$ is played by $1/\dt$, we can assume set such a large value of $\lambda$ in what follows in order to solve the corresponding elliptic problem. 

We first solve the following elliptic problem for $g\in C^{0,1}(\mathbb{R})$ such that $g=0$ on $\mathbb{R}\backslash (0,k)$:
\begin{equation}\label{pbe}
    \begin{cases}
      \lambda\beta(u)+ \A u=\Div (\overset{\to}{f}(u))+g(u)+\lambda \beta(\tilde{u}) \quad \text{in} \;\Omega,\\
      u=0 \quad \text{on} \; \Rn\backslash \Omega,
    \end{cases}\
\end{equation}
where $\lambda \beta(\tilde{u})\in L^\infty(\Omega)$ for some $\tilde{u}\in [0,k]$ a.e..

\begin{theorem}\label{elli}
There exists $u\in [0,k]$ that is a weak solution of \eqref{pbe}.
\end{theorem}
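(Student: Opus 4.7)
The plan is to reduce problem \eqref{pbe} to the framework of Theorem \ref{exielli} by absorbing the reaction term $g(u)$ into a single monotone nonlinearity, and then to extract the pointwise bounds $0\leq u\leq k$ by testing against truncations that exploit the support properties of $\ff$ and $g$.

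First I would set $H(s):=\lambda\beta(s)-g(s)$. For $\lambda$ sufficiently large, $H$ is nondecreasing on $\mathbb{R}$ (as observed in the paragraph preceding the statement), vanishes at $0$ because $\beta(0)=g(0)=0$, and satisfies the growth bound $|H(s)|\leq c(1+|s|^{p-1})$ inherited from \eqref{beta2} together with the boundedness of $g$. With this substitution, \eqref{pbe} is equivalent to
\[
H(u)+\A u=\Div\ff(u)+h, \qquad h:=\lambda\beta(\tilde u)\in L^\infty(\Omega)\subset \W^*,
\]
which is precisely the equation solved by Theorem \ref{exielli} (with coefficient $1$ in place of $\lambda$ in front of the operator) once $\beta$ is replaced there by $H$. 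The direct method in the functional $J_w$ still applies with the primitive $\tilde B$ of $H$ replacing $B$: $\tilde B$ is convex because $H$ is nondecreasing and inherits the $p$-growth used in the dominated convergence step. The Schaefer fixed point argument is unchanged since it only uses the $L^\infty$-Lipschitz character of $\ff$ and the standard monotonicity estimates for $\A$.

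To obtain $u\leq k$ a.e.\ I would test the weak formulation with $\varphi=(u-k)_+ \in \W$, which is admissible since truncation preserves $\Wpz$ and $\Wsqz$. Because $g=0$ and $\ff=0$ on $\mathbb{R}\setminus(0,k)$, both $\int_\Omega g(u)(u-k)_+$ and, after integration by parts, $-\int_{\{u>k\}}\ff(u)\cdot\nabla u$ vanish, leaving
\[
\lambda\int_\Omega(\beta(u)-\beta(\tilde u))(u-k)_+\,dx + \langle\A u,(u-k)_+\rangle=0.
\]
The fractional contribution is nonnegative by the pointwise inequality $|a-b|^{q-2}(a-b)((a-k)_+-(b-k)_+)\geq 0$, and the local $p$-Laplacian contribution equals $\int_{\{u>k\}}|\nabla u|^p\geq 0$. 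Since $\tilde u\leq k\leq u$ on $\{u>k\}$ and $\beta$ is strictly increasing, the first integrand is strictly positive on $\{u>k\}$, so the sum being zero forces $|\{u>k\}|=0$.

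The lower bound $u\geq 0$ follows analogously by testing with $u_-=\min(u,0)\in \W$, using $\tilde u\geq 0$, the vanishing of $g$ and $\ff$ on $(-\infty,0)$, and the pointwise sign property $(a-b)(a_- - b_-)\geq 0$ which yields $\langle\A u,u_-\rangle\geq 0$. The main obstacle I anticipate is not conceptual but bookkeeping: one must re-verify that every step in the proof of Theorem \ref{exielli} (coercivity of $J_w$, dominated convergence on the primitive term, and the a priori bound on $\{u=\tau\Gamma(u)\}$) carries over after swapping $\beta$ for $H$. Because $\lambda\beta$ dominates $g$ by construction and $g$ is bounded, the relevant estimates degrade only by additive constants and remain uniform; the test-function argument is then routine.
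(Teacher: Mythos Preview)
Your proposal is correct and follows essentially the same route as the paper: the paper also obtains existence by noting that $\lambda\beta(t)-g(t)$ is nondecreasing (your $H$) so that the fixed-point argument behind Theorem~\ref{exielli} applies, and then derives $0\leq u\leq k$ by testing with $u_-$ and $(u-k)_+$, exploiting that $g$ and $\ff$ vanish outside $(0,k)$ and that $\langle\A u,(u-k)_+\rangle,\ \langle\A u,u_-\rangle\geq 0$. The only cosmetic difference is that the paper concludes $u\leq k$ from $\|(u-k)_+\|_{\Wpz}^p\leq 0$ rather than from strict positivity of the $\beta$-term, but the two arguments are interchangeable.
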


\begin{proof}
The proof is similar to the proof of Theorem \ref{elli2}. Indeed, using a fixed point method, that $g$ is Lipschitz and $g(0)=0$, and that $\lambda\beta(t)-g(t)$ is nondecreasing on $\mathbb{R}$, is enough to obtain a solution. 

We now prove that $u\in [0,k]$ a.e.. We start by shoing $u\geq 0$, in order to do that we take the test function $u_-= u\mathbb{1}_{\{u\leq 0\}}\in \W$ and observe that $\langle \qfrac u ,u_- \rangle \geq 0$. This gives,
$$
\int_\Omega (\lambda\beta(u)-g(u))u_-+ \|\nabla u_-\|^p_{L^p(\Omega)}\leq -\int_\Omega \overset{\to}{f}(u).\nabla u_-+\int_\Omega \lambda \beta(\tilde{u})u_-.
$$
This, together with $\lambda \beta(\tilde{u})\geq 0$, that $\lambda\beta(t)-g(t)$ is nondecreasing and that $\overset{\to}{f}=0$ on $(-\infty, 0)$, implies
$$
\|\nabla u_-\|^p_{L^p(\Omega)}\leq 0,
$$
which means that $u_-=0$ a.e. and then $u\geq 0$ a.e..

We now show that $u\leq k$ a.e.. In this case we take the test function $\varphi=(u-k)_+\in \W$ and use the fact that $\langle \qfrac u ,(u-k)_+ \rangle \geq 0$, then
$$
\int_\Omega (\lambda\beta(u)-\lambda \beta(\tilde{u}))(u-k)_++\|(u-k)_+\|^p_\Wpz\leq 0,
$$
and $(\lambda\beta(u)-\lambda \beta(\tilde{u}))(u-k)_+\geq 0$ because $\beta$ is nondecreasing, hence $u\leq k$.
\end{proof}

\begin{proof} [Proof of Theorem \ref{para2}]

\textbf{Step 1:} We first take $g$ Lipschitz such that $g=0$ on $\mathbb{R}\backslash (0,k)$. And we prove existence of a weak solution satisfying:

\begin{equation}\label{estimate}
\|\partial_t u\|^2_{L^2(Q_T)}+\|u\|^p_{L^\infty(0,T;\Wpz)}+\mu \|u\|_{L^\infty(0,T;\Wsqz)}^q \leq C.
\end{equation}

As the method is similar to the one of the proof of Theorem~\ref{para1}, we only highlight here the main differences. 

Using Theorem~\ref{elli}, we define the sequence $(u^n)_n\subset \W$ by
\begin{equation*}
    \begin{cases}
     \frac{1}{\dt} \beta(u^n)+ \A  u^n=g(u^n)+\Div (\overset{\to}{f}(u^n))+\frac{1}{\dt} \beta(u^{n-1}) \quad \text{in} \;\Omega,\\
      u^n=0 \quad \text{in} \; \Rn\backslash \Omega , \\
      u^n\in [0,k] \; a.e..
    \end{cases}
\end{equation*}
where $u^0=u_0$. Now, we can use the uniform bound given by the assumption on $g$, and by the same method, we get similar a priori estimates. In particular, we have that
\begin{equation}\label{2bis}
\|\partial_t \tilde{u}_\dt\|^2_{L^2(Q_T)}+\max_{n\in \llbracket 0, N \rrbracket} J_\A(u^n) \leq \tilde{C},
\end{equation}
for a positive constant $\tilde C$ that is independent of $\dt$ and the only dependence of if on $g$ is through $\|g\|_{L^\infty([0,k])}$ to a positive power.
%$\overset{\to}{f}$, $\|g\|_{L^\infty([0,k])}$, $p$, $d$, $\Omega$ and $T$.

The convergence part of the proof is similar, the only different point is the convergence of $(g(u_\dt))_\dt$, simply given now by the fact that $g$ is Lipschitz, thus for $\gamma\in [1,\infty)$
\begin{equation*}
g(u_\dt)\to g(u) \text{ in } L^\infty(0,T;L^\gamma(\Omega)) \text{ as  } \dt\to0.
\end{equation*}

We can pass to the limit in the equation in the same way to get a weak solution. Moreover, we obtain \eqref{estimate} by \eqref{2bis} and using \eqref{3} and \eqref{4}.

\textbf{Step 2:}

We now take $g$ Lipschitz and $g(0)=0$. We set a sequence of approximating functions $g_n$, that are Lipschitz and such that $g_n=g$ on $[0,k]$ and $g_n=0$ on $(-\infty,0]\cap[k+\frac{1}{n},\infty)$, and that $\|g_n\|_{L^\infty(\mathbb{R}^+)}\leq \|g\|_{L^\infty(0,k)}$ for all $n$.

By the Step 1, we obtain a sequence of solutions $(u_n)_n$ to each associated problem \eqref{pbp} where $g$ is replaced by $g_n$ and such that $u_n\in[0,k+\frac{1}{n}]$ for all $n$. Then, using \eqref{estimate}, we have up to a subsequence
\begin{itemize}
\item $ u_n \overset{*}{\rightharpoonup}u \text{ in }L^\infty(0,T;\Wpz)$,
\item $ \partial_t u_n \rightharpoonup \partial_t u \text{ in } L^2(Q_T)$.
\end{itemize}

Using again \eqref{estimate} and Aubin-Simon Lemma, we obtain
$$
u_n, \beta(u_n) \to u, \beta(u) \text{ a.e. and in } C([0,T];L^\gamma(\Omega)),\; \forall \gamma \in [1,\infty).
$$
Using the a.e. convergence we have that $u\in[0,k]$ a.e. and that, for all $\gamma \in [1,\infty)$,
$$
\overset{\to}{f}(u_n) \to \overset{\to}{f}(u) \text{ in } C([0,T];L^\gamma(\Omega)) , \; \forall \gamma \in [1,\infty).
$$

We now show convergence for $g_n(u_n)$. First, we write,
\begin{equation}\label{8}
g_n(u_n)=g_n(u_n)\mathbb{1}_{\{ u_n\in(k,k+\frac{1}{n})\}}+g(u_n)\mathbb{1}_{\{ u_n\in[0,k]\}}.
\end{equation}
Since $u_n\to u \text{ in } C([0,T];L^\gamma(\Omega)),\; \forall \gamma \in [1,\infty)$ and $g$ is Lipschitz, we have that $\forall \gamma \in [1,\infty)$,
\begin{equation}\label{9}
g(u_n) \to g(u) \text{ in } C([0,T];L^\gamma(\Omega)) \text{ as } n\to\infty.
\end{equation}

On the other hand, using that $\|g_n\|_{L^\infty(\mathbb{R}^+)}\leq \|g\|_{L^\infty(0,k)}$, we also have that for all $\gamma \in [1,\infty)$
\begin{equation}\label{10}
g_n(u_n)\mathbb{1}_{\{ u_n\in(k,k+\frac{1}{n})\}} \to 0 \text{ in } C([0,T];L^\gamma(\Omega)) \text{ as } n\to \infty.
\end{equation}

Combining \eqref{8}, \eqref{9} and \eqref{10} we obtain, for all $ \gamma \in [1,\infty)$,
$$
g_n(u_n) \to g(u) \text{ in } C([0,T];L^\gamma(\Omega)) \text{ as } n\to\infty.
$$

As for the Step 3 of the proof of Theorem~\ref{para1}, passing to the limit in operator terms and then in the weak formulation, we can conclude that $u\in [0,k]$ is a weak solution. 

Finally, using Remark~\ref{rk} and that $g(u)\in L^\infty(Q_\infty)$, we get that $u$ is a global weak-mild solution.
\end{proof}

\section{Qualitative behavior}\label{sec3}
In this section we prove some results on the global behavior of solutions, in particular we explore conditions under which stabilization to a non-trivial steady state, extinction and blow up occur.

\subsection{Convergence to the the steady state: Proof of Theorem \ref{thstab}}
We first study the following steady state problem, for $h\in L^\infty(\Omega)$ nonnegative
\begin{equation}\label{pbstab}
    \begin{cases}
 \A u=\Div (\overset{\to}{f}(u))+h\quad \text{in} \;\Omega,\\
      u=0 \quad \text{on} \; \Rn\backslash \Omega.
    \end{cases}\
\end{equation}

Following similar arguments as in the proof of Theorem \ref{elli2}, we can conclude that there exists a nontrivial bounded weak solution of problem \eqref{pbstab}. Also using the same method as in the proof of Theorem~\ref{elli2}, we have that the solutions are nonnegative. The following Lemma gives a comparison principle (here $\mu>0$).

\begin{lemma}\label{princcomp}
Let $\mu>0$ and $u$, $v\in\{ u\in \W \; |\; \A u-\Div \ff (u) \in L^1(\Omega)\}$, such that
\begin{equation}\label{29}
\A u-\Div \ff (u) \leq \A v-\Div \ff (v) \text{ a.e. in } \Omega,
\end{equation}
then, $u\leq v$ a.e. in $\Omega$.
\end{lemma}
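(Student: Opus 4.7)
The strategy is to test the inequality $\A u-\Div\ff(u)\leq \A v-\Div\ff(v)$ against a smooth nondecreasing approximation of the Heaviside function applied to $u-v$, and to exploit the fact that (since $\mu>0$) the strictly monotone nonlocal operator sees the zero boundary values on $\Rn\setminus\Omega$, producing strict positivity that the (indefinite sign) convection term cannot cancel in the limit. Specifically, for $\varepsilon>0$ I would fix a Lipschitz, nondecreasing $H_\varepsilon\colon \mathbb{R}\to[0,1]$ with $H_\varepsilon(s)=0$ on $(-\infty,0]$, $H_\varepsilon(s)=1$ on $[\varepsilon,\infty)$, and $H_\varepsilon'\leq 1/\varepsilon$ (say piecewise linear). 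Since $u=v=0$ outside $\Omega$ and $H_\varepsilon(0)=0$, the test function $\psi_\varepsilon:=H_\varepsilon(u-v)$ lies in $\W\cap L^\infty(\Omega)$ and is nonnegative, so multiplying the inequality by $\psi_\varepsilon$, integrating, and performing the integration by parts on the convection term gives
\[
\int_\Omega (\A u-\A v)\psi_\varepsilon \;\leq\; -\int_\Omega (\ff(u)-\ff(v))\cdot\nabla(u-v)\,H_\varepsilon'(u-v).
\]

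Next I would argue that both pieces of the LHS are nonnegative. The $p$-Laplacian contribution is $\int_\Omega(|\nabla u|^{p-2}\nabla u-|\nabla v|^{p-2}\nabla v)\cdot\nabla(u-v)\,H_\varepsilon'(u-v)\geq 0$ by the classical monotonicity of $|\cdot|^{p-2}\cdot$ together with $H_\varepsilon'\geq 0$. The fractional contribution $\mu\langle\qfrac u-\qfrac v,\psi_\varepsilon\rangle$ has a pointwise nonnegative integrand: writing $\mathcal{J}_q(t)=|t|^{q-2}t$ and using $(u(x)-u(y))-(v(x)-v(y))=(u(x)-v(x))-(u(y)-v(y))$, the two factors $\mathcal{J}_q(u(x)-u(y))-\mathcal{J}_q(v(x)-v(y))$ and $H_\varepsilon(u(x)-v(x))-H_\varepsilon(u(y)-v(y))$ share the sign of $(u(x)-v(x))-(u(y)-v(y))$, since $\mathcal{J}_q$ and $H_\varepsilon$ are both nondecreasing.

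For the RHS I would use that $\ff$ is Lipschitz (since $f_i\in W^{1,\infty}$) and the elementary bound $|s|H_\varepsilon'(s)\leq \mathbf{1}_{\{0<s<\varepsilon\}}$ to get
\[
\left|\int_\Omega (\ff(u)-\ff(v))\cdot\nabla(u-v)\,H_\varepsilon'(u-v)\right|\leq L\int_\Omega \mathbf{1}_{\{0<u-v<\varepsilon\}}|\nabla(u-v)|,
\]
which tends to $0$ as $\varepsilon\to 0$ by dominated convergence, the dominating function being $|\nabla(u-v)|\in L^1(\Omega)$. Hence both nonnegative pieces on the LHS vanish in the limit; in particular, the nonlocal one does.

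The concluding step—and the main obstacle—is to deduce $u\leq v$ a.e.\ from the vanishing of the nonlocal contribution. Set $E:=\{u>v\}$; since $H_\varepsilon(u-v)\to \mathbf{1}_E$ pointwise and the integrand remains nonnegative, Fatou's lemma yields
\[
0 \;\geq\; \mu\int_{\Rn\times\Rn}\frac{(\mathcal{J}_q(u(x)-u(y))-\mathcal{J}_q(v(x)-v(y)))(\mathbf{1}_E(x)-\mathbf{1}_E(y))}{|x-y|^{d+sq}}\,dx\,dy.
\]
The integrand is nonnegative, so the integral equals zero. The crucial observation is that the piece supported on $E\times(\Rn\setminus\Omega)$ is strictly positive whenever $|E|>0$: on that set, $u(y)=v(y)=0$, $u(x)>v(x)\geq \cdots$, and the strict monotonicity of $\mathcal{J}_q$ for $q>1$ gives $\mathcal{J}_q(u(x))-\mathcal{J}_q(v(x))>0$, while $\mathbf{1}_E(x)-\mathbf{1}_E(y)=1$; the double integral is finite because $\Omega$ is bounded and $sq>0$ ensures integrability at infinity. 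This contradicts the previous line unless $|E|=0$, which is exactly $u\leq v$ a.e. The delicate point throughout is that the convection term is not accretive against $(u-v)_+$ directly, so a naive test with $(u-v)_+$ fails; the $\varepsilon$-regularization is what allows the convection piece to vanish while the monotonicity of $\A$ (and the long-range effect of the fractional operator, which requires $\mu>0$) does the work.
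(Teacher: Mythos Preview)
Your proof is correct and follows the same overall strategy as the paper: test against a smooth nondecreasing approximation $H_\varepsilon$ of the Heaviside applied to $u-v$, observe that both the local and the nonlocal parts of $\A$ contribute nonnegatively, and then exploit the strict positivity coming from the nonlocal term (this is where $\mu>0$ and the exterior condition $u=v=0$ on $\Rn\setminus\Omega$ are used) to rule out $|\{u>v\}|>0$.

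The one genuine difference is how the convection term is disposed of. You integrate by parts, use the Lipschitz bound $|\ff(u)-\ff(v)|\leq L|u-v|$ together with $|s|H_\varepsilon'(s)\leq\mathbf{1}_{\{0<s<\varepsilon\}}$, and let dominated convergence kill the term as $\varepsilon\to 0$; this is the classical Kato--Kruzhkov device. The paper instead decomposes each $f_i$ into a nondecreasing and a nonincreasing part $\overline f_i,\underline f_i$ (the same trick used in its accretivity proof) and shows \emph{directly} that $\int_\Omega\Div(\ff(u)-\ff(v))\mathbf{1}_{\{u>v\}}=0$, so no $\varepsilon$-limit is needed for this term. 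Your route is more standard and self-contained; the paper's route recycles machinery it has already built. One minor remark: your claim that the $E\times(\Rn\setminus\Omega)$ contribution is \emph{finite} is neither needed nor obvious near $\partial\Omega$; what matters is only that the integrand there is strictly positive on a set of positive measure, which already contradicts the total (nonnegative) integral being zero.
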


\begin{proof}

Let $u,v\in \W$ as above, we first show that:
\begin{equation}\label{15}
\int_\Omega \Div(\ff(u)-\ff(v))\mathbb{1}_{\{u>v\}}=0.
\end{equation}
We set $\overline{f}_i$ and $\underline{f}_i$ defined as in the proof of Theorem \ref{thacc}. Observe that:
$$
\int_\Omega \Div(\overline f(u)-\overline f(v))\mathbb{1}_{\{u>v\}}=\int_\Omega \Div((\overline f(u)-\overline f(v))_+)=0.
$$
with $\overline{f}\in W^{1,\infty}(\Omega)$. Similarly, we obtain
$$\int_\Omega \Div(\underline f(u)-\underline f(v))\mathbb{1}_{\{u>v\}}=0,$$
thus, we have \eqref{15}. We now show that if
\begin{equation}\label{17}
\int_\Omega (\A u-\A v)\mathbb{1}_{\{u>v\}}\leq 0, 
\end{equation}
then $u\leq v$ a.e. in $\Omega$. In order to show this, we set
\begin{align*}
\gamma_\varepsilon(x)= \left\{ \begin{array}{lr} 1 \text{ if } x>\varepsilon, \\
\Psi_\varepsilon(x) \text{ if } x\in [0,\varepsilon], \\
0 \text{ if } x<0,
\end{array} \right.
\end{align*}
where $\Psi_\varepsilon\in C^2([0,\varepsilon])$, $\Psi(0)=0$, $\Psi_\varepsilon(\varepsilon)=1$ and $\Psi_\varepsilon'\geq 0$. Clearly, $\gamma_\varepsilon(u-v) \in L^\infty(\Omega)\cap \W$.

We now subtract the weak formulation of the equations for $u$ and $v$ and test with 
$\gamma_\varepsilon(u-v)$. We look at the integrands first, for which the following should be understood to hold $a.e.$. 

We use here the notation $\Phi_q(s-t)=|s-t|^{q-2}(s-t)$, then,
\begin{equation*}
\frac{\Phi_q(u(x)-u(y))-\Phi_q(v(x)-v(y))}{|x-y|^{N+sp}}(\gamma_\varepsilon(u(x)-v(x))-\gamma_\varepsilon(u(y)-v(y)))\geq 0,
\end{equation*}
indeed $\gamma_\varepsilon$ is nondecreasing. Let us argue now by contradiction, if there exists $K\subset \Omega$ such that $|K|>0$ and $u>v$ on $K$ then we have in $K$ that
\begin{equation*}
\frac{\Phi_q(u(x)-u(y))-\Phi_q(v(x)-v(y))}{|x-y|^{N+sp}}(\gamma_\varepsilon(u(x)-v(x))-\gamma_\varepsilon(u(y)-v(y)))\geq C>0.
\end{equation*}
This gives
\begin{equation*}
\langle\A u-\A v,\gamma_\varepsilon(u-v)\rangle\geq \langle\qfrac u-\qfrac v,\gamma_\varepsilon(u-v)\rangle\geq C>0.
\end{equation*}

Taking the limit $\varepsilon\to 0$, we have that $\gamma_\varepsilon(u-v)\to \mathbb{1}_{\{u>v\}}$ in $L^\infty(\Omega)$, and hence
$$
\int_\Omega (\A u-\A v)\mathbb{1}_{\{u>v\}}\geq C,
$$
but this contradicts \eqref{17}.
\end{proof}

Using the elliptic comparison principle on the discretization we get the following parabolic comparison principle.
\begin{corollary}\label{compara}
Let $\mu\geq 0,$ $u_0\leq v_0$ and $h_1,h_2\in L^\infty(Q_T)$ such that $h_1\leq h_2$. We note $u,v$ the weak-mild solutions of \eqref{pbp} for source term $g=h_1$ and $h_2$ respectively, then $u\leq v$.
\end{corollary}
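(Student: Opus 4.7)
The plan is to transfer the elliptic comparison of Lemma~\ref{princcomp} to the time-discrete scheme used in the proof of Theorem~\ref{para1}, establish the pointwise ordering of the discrete iterates by induction on the time step, and then pass to the limit $\dt\to 0$ using the uniqueness of weak-mild solutions (Remark~\ref{rk}) together with the a.e. convergence provided by~\eqref{18}.

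Concretely, I would let $(u^n)_n$ and $(v^n)_n$ be the discretized sequences constructed in Step~1 of the proof of Theorem~\ref{para1} starting from $(u_0,h_1)$ and $(v_0,h_2)$, respectively. Each iterate solves an elliptic problem of the form
\[
\tfrac{1}{\dt}\beta(u^n)+\A u^n-\Div\ff(u^n)=h_1^n+\tfrac{1}{\dt}\beta(u^{n-1}),
\]
and analogously for $v^n$ with $h_2^n$ and $\beta(v^{n-1})$ on the right-hand side. Since the time-averaging operator $T_\dt$ is order-preserving, we have $h_1^n\le h_2^n$. I would then argue by induction: the base case $u^0=u_0\le v_0=v^0$ holds by hypothesis, and assuming $u^{n-1}\le v^{n-1}$, the monotonicity of $\beta$ gives
\[
h_1^n+\tfrac{1}{\dt}\beta(u^{n-1})\le h_2^n+\tfrac{1}{\dt}\beta(v^{n-1}).
\]

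The induction step itself is a direct adaptation of Lemma~\ref{princcomp} to the operator $w\mapsto \tfrac{1}{\dt}\beta(w)+\A w-\Div\ff(w)$: testing the difference of the two equations with $\gamma_\varepsilon(u^n-v^n)$, the zeroth-order term $\tfrac{1}{\dt}(\beta(u^n)-\beta(v^n))\gamma_\varepsilon(u^n-v^n)$ is pointwise nonnegative (since both $\beta$ and $\gamma_\varepsilon$ are nondecreasing with $\gamma_\varepsilon(0)=0$), the $\A$-contribution is treated exactly as in Lemma~\ref{princcomp}, the divergence term vanishes as in~\eqref{15}, and the right-hand side is $\le 0$ by the displayed inequality. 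Letting $\varepsilon\to 0$ yields $u^n\le v^n$ a.e. Once the induction is complete, the piecewise constant interpolations satisfy $u_\dt\le v_\dt$ a.e. on $Q_T$, and~\eqref{18} combined with the uniqueness of the weak-mild solution yields $u\le v$ a.e.

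The main subtlety to check carefully is the extension of Lemma~\ref{princcomp}: first, to incorporate the additional monotone zeroth-order term $\tfrac{1}{\dt}\beta$, which only reinforces the strict-positivity argument on the set $\{u^n>v^n\}$; and second, to cover the case $\mu=0$, which is not covered by the statement of Lemma~\ref{princcomp} but follows from the same $\gamma_\varepsilon$-testing argument applied solely to the $p$-Laplacian, whose monotonicity yields $\langle -\Delta_p u^n+\Delta_p v^n,\gamma_\varepsilon(u^n-v^n)\rangle\ge 0$ with strict positivity on any set of positive measure where $u^n>v^n$.
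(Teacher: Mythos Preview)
Your approach is essentially the same as the paper's: transfer the elliptic comparison to the time-discretized problem, proceed by induction on $n$, and pass to the limit using uniqueness of the mild solution.

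There is, however, one genuine gap in your handling of the case $\mu=0$. You claim that the $p$-Laplacian alone yields strict positivity of $\langle -\Delta_p u^n+\Delta_p v^n,\gamma_\varepsilon(u^n-v^n)\rangle$ whenever $|\{u^n>v^n\}|>0$. This does not follow from the argument in Lemma~\ref{princcomp}: the integrand $\gamma_\varepsilon'(u^n-v^n)\,(|\nabla u^n|^{p-2}\nabla u^n-|\nabla v^n|^{p-2}\nabla v^n)\cdot\nabla(u^n-v^n)$ is supported on the thin strip $\{0<u^n-v^n<\varepsilon\}$, and there is no $\varepsilon$-independent lower bound that survives the limit $\varepsilon\to 0$, unlike for the nonlocal term $\qfrac$ where the comparison of $u^n(x)$ with values $u^n(y)=v^n(y)=0$ outside $\Omega$ produces a fixed positive contribution on $K\times(\Rn\setminus\Omega)$.

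The fix is exactly what the paper does: for $\mu=0$ it is the zeroth-order term $\tfrac{1}{\dt}\beta$ that plays the role of $\qfrac$. Since $\beta$ is strictly increasing (indeed satisfies \eqref{beta1}), one has
\[
\int_\Omega \tfrac{1}{\dt}\big(\beta(u^n)-\beta(v^n)\big)\gamma_\varepsilon(u^n-v^n)\,dx\;\longrightarrow\;\int_\Omega \tfrac{1}{\dt}\big(\beta(u^n)-\beta(v^n)\big)\mathbb{1}_{\{u^n>v^n\}}\,dx>0
\]
as $\varepsilon\to 0$, which furnishes the contradiction. So the $\beta$ term is not merely a reinforcement but the actual source of strict positivity when $\mu=0$; once you make this substitution, your proof goes through and matches the paper's.
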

\begin{proof}
Using the uniqueness of the mild solution we can look at the discretization. For $\lambda \beta(u)+\A u -\Div\ff(u)$ we still have a comparison principle using that $\beta$ is increasing. Also when $\mu=0$ the $\beta$ term has the same purpose as the $\qfrac$ term in the proof of Lemma \ref{princcomp} and we can conclude by an analogous the comparison principle. Finally, the proof of the corollary follows by applying this principle to the discretization in time by an induction argument.
\end{proof}

\begin{proof}[Proof of Theorem \ref{thstab}]
We first take $u_0=0$. With the hypothesis of Theorem \ref{thstab}, we have a global solution using Theorem~\ref{para1}. Using Corollary \ref{compara} for $u_0=0$ and $v_0=u_{stat}$ we have that $u\leq v $ where $v=u_{stat}$ is constant in time, similarly we have $u\geq 0$. 
Using $u_0=0$, we have, in the first step of the discretization scheme, that $u^1\geq u_0$. We can iterate the comparison to get, with Lemma \ref{princcomp} and an induction argument, that $u$ is nondecreasing in time. Now, by the monotone convergence theorem we have that there exists $u_\infty\in L^\gamma(\Omega)$ such that $u\to u_\infty$ in $L^\gamma(\Omega)$ for all $\gamma<\infty$ when $t\to \infty$. 

Using now for each $i=1,\dots,d$, $f_i\in C(\mathbb{R})$, we find, with the notation $F_i(t)=\int_0^tf_i(s)ds$, that
$$
\int_\Omega \Div \ff(u) u = -\int_\Omega \ff(u) .\nabla u=-\int_\Omega \Div(F(u))=0,
$$
which gives
$$
d_t\|B(u)\|_1+\|u\|^p_\Wpz+\mu\|u\|^q_\Wsqz=\int_\Omega h u.
$$
Using here that $u$ is nondecreasing in time, we have $d_t\|B(u)\|_1\geq 0$ and then $u(t)$ is bounded in $\W\cap L^\infty(\Omega)$.

Next we use the following notation; if $\tilde v\in \W\cap L^\infty(\Omega)$ we let $S(t,\tilde v)=v(t)$ be the solution of \eqref{pbp} for the initial value $\tilde v$. Using a similar method as in Step 2 of the proof of Theorem~\ref{para2}, we have that if $(\tilde v_n)_n$ is a bounded sequence in $L^\infty \cap \W$ and $\tilde v_n\to \tilde v$ in $L^1(\Omega)$ as $\to\infty$, then $S(t,\tilde v_n) \to S(t,\tilde v)$ in $L^1(\Omega)$ as $n\to\infty$, thus
$$
S(t,\lim_{n\to \infty} S(Tn,u_0))=S(t,{u}_\infty).
$$
Using how we have defined the global solution in the proof of Theorem~\ref{para1} we have,
$$
{u}_\infty=\lim_{n\to \infty} S(t+Tn,{u}_0)=\lim_{n\to \infty} S(t,S(Tn,{u}_0))=S(t,\lim_{n\to \infty} S(Tn,u_0))=S(t,{u}_\infty).
$$
This implies that $u_\infty=\tilde u_{stat}$, by uniqueness of the stationary solution, is a solution of \eqref{pbstab}. For $u_0\in [ 0,u_{stat}]$ we can use Corollary \ref{compara} in a similar way to conclude the proof.
%finally we have $u_\infty=u_{stat}$  by uniqueness of the stationary solution. For $u_0\in [ 0,u_{stat}]$ we can use Corollary \ref{compara} to conclude.
\end{proof}

\begin{remark}
We note that we can generalize Theorem \ref{thstab} by taking a suitable nonlinear source term. This is possible as long as a comparison principles and existence of a unique nontrivial solution of the elliptic problem can be guaranteed. For example, taking $g(x,u)$ a Caratheodory function such that $g(x,\cdot)$ is nonincreasing and $0\leq g(\cdot,0)\in L^\infty(\Omega) $ gives existence and uniqueness of the elliptic problem. Also using the discretization in tiome scheme
$$
\frac{\beta(u^n)-\beta(u^{n-1})}{\dt}+\A u^n=\Div \ff(u^n)+g(u^n),
$$
and $g$ nonincreasing we get a parabolic comparison principle. Finally, $g(x,0)\geq 0$ gives that $0$ is a subsolution of the associated stabilization problem thus we get stabilization as in the proof of Theorem \ref{thstab}.

We can also generalize the initial condition for the specific case $\ff =0$. We get using \cite{antonini} that if $sq<p$ then the solution of $\A u_K=|g(u_K)|+K$ satisfies $u_{K}\geq  c_K \text{dist}(\cdot,\Omega^c)$  thus by taking $u_0\in[0, u_K]$ we can use a sub and super solution method to get stabilization . And for the more specific case $q=p$ we have by homogeneity that $c_K\to \infty$ while $K\to \infty$ and so $u_0\leq C \text{dist}(\cdot,\Omega^c)$ implies the existence of a supersolution $u_K\geq u_0$. Indeed, the arguments employed in \cite{constantin2024existence}) can be adapted. 
\end{remark}

\subsection{Extinction and blow up: Proofs of Theorems \ref{thextin} and \ref{thexpl}}

In this section, we use energy methods to prove Theorems \ref{thextin} and \ref{thexpl}. We start with the proof of the former, where we obtain extinction results.

\begin{proof}[Proof of Theorem \ref{thextin}]
Note that with our hypotheses and Theorem \ref{para1} we have a global solution.
We take the test function $|u|^{k-1}u$ for $k\geq 1$, using that each $f_i\in C(\mathbb{R})$ with $i=1,\dots,d$ and using the notation $F_i(t)=\int_0^tf_i(\theta^\frac{1}{k})d\theta$, we get 
$$
\int_\Omega \Div \ff(u) |u|^{k-1}u \,dx = -\int_\Omega \ff(u) .\nabla (|u|^{k-1}u)\,dx=-\int_\Omega \Div(F(|u|^{k-1}u))\,dx=\int_\Gamma F(0) \,d\nu=0.
$$
This gives, together with $\langle \Delta_p u,|u|^{k-1}u\rangle\geq 0$, that
$$
\frac{m}{km+1}d_t\|u\|^{\frac{1}{m}+k}_{\frac{1}{m}+k}+\mu\langle \qfrac u, |u|^{k-1}u\rangle \leq \|u\|_{r+k}^{r+k}.
$$

From \eqref{ineg2} we infer that $(u(x)-u(y))(|u|^{k-1}u(x)-|u|^{k-1}u(y))\geq c|u(x)-u(y)|^{k+1}$, where the constant $c$ depends only on $k$ and $d$. Then,
\begin{equation*}
\langle \qfrac u,|u|^{k-1}u \rangle \geq c \int_\Rn \int_\Rn \frac{|u(x)-u(y)|^{q-1+k}}{|x-y|^{d+sq}}\,dxdy=c\|u\|^{\tilde{q}}_{W^{\tilde{s},\tilde{q}}_0}
\end{equation*}
where 
$$
\tilde{q}=q-1+k, \ \tilde{s}=\dfrac{sq}{\tilde{q}}.
$$ 

For $k$ big enough we have $W^{\tilde{s},\tilde{q}}_0(\Omega)\hookrightarrow L^{k+\frac{1}{m}}(\Omega)$, and thus we fix such a $k$, taking for example $k\geq \min(1,\frac{d-sq-d(q-1)m}{msq})$. This gives
$$
c_1d_t\|u\|^{\frac{1}{m}+k}_{\frac{1}{m}+k}+\frac{\mu}{2} \|u\|^{\tilde q}_{\frac{1}{m}+k}\leq c_2\|u\|_{\frac{1}{m}+k}^{r+k}-\frac{\mu}{2}\|u\|^{\tilde{q}}_{\frac{1}{m}+k}.
$$

We observe that, since $\tilde q < r+k$, there exists $c$ such that $\|u\|_{\frac{1}{m}+k}^{r+k}-\frac{\mu}{2}\|u\|^{\tilde q}_{\frac{1}{m}+k}\leq 0$, if $\|u\|_{\frac{1}{m}+k}\leq c$. Now, taking $\|u_0\|_{\frac{1}{m}+k}\leq c$ and using that $u\in C([0,T];L^{\frac{1}{m}+k}(\Omega))$ we arrive at the inequality
$$
c_1d_t\|u\|^{\frac{1}{m}+k}_{\frac{1}{m}+k}+\frac{\mu}{2}\|u\|^{\tilde q}_{\frac{1}{m}+k}\leq 0.
$$

We now write the last estimate by setting $Y(t)=\|u\|^{\frac{1}{m}+k}_{\frac{1}{m}+k}$, for simplicity of notation. Hence, since $Y(t)>0$ for all $t$, we have, for some $C>0$ independent of $t$, that
$$
\frac{d_t Y(t)^{1-\alpha}}{1-\alpha}+C\leq 0 \text{ with } \alpha=\frac{q-1+k}{\frac{1}{m}+k}<1.
$$
%where $\alpha=\frac{q-1+k}{\frac{1}{m}+k}<1$ and 
Integrating with respect to $t$ gives
$$
Y(t)^{1-\alpha}\leq -Ct+Y(0)^{1-\alpha},
$$
and then $Y(t)<0$ for $t$ large enough. But this gives a contradiction, then $Y(t)$ must tend to $0$ as $t$ tends to a finite value smaller than or equal to  $Y(0)^{1-\alpha}/C$.
\end{proof}

\begin{proof}[Proof of Theorem \ref{thexpl}]
First, we note that with the condition \eqref{condf}, the Hölder inequality and Sobolev embedding results we have the estimate
\begin{equation}\label{37}
\| \Div \ff(u) \| ^2_2 \leq C\left(\| \nabla u\|^2_p+\|\nabla u \|^{2(\gamma+1)}_p\right).
\end{equation}

We first consider the case $q\leq p<r+ 1$. In this case we can choose a value $\varepsilon\in (0,1)$ such that 
$$
c_\varepsilon:=\left(1-\frac{p}{(r+1)(1-\varepsilon)}\right)>0.
$$
With this we now define the following functional
$$
I(u):=\frac{1-\varepsilon}{p}\|\nabla u \| ^p_p-\frac{1}{r+1} \| u \| ^{r+1}_{r+1}+\mu \frac{1-\varepsilon}{p}\| u \| _\Wsqz^q.
$$
Using the discretization in time scheme, we have as for \eqref{26} (using \eqref{41} and passing to the limit $\dt\to 0$) that for a.e. in $t$:
\begin{equation*}
 J_\A(u(t))-\frac{ \| u(t) \| ^{r+1}_{r+1} }{r+1}\leq J_\A(u_0)+C\| \Div \ff( u)\|^2_{L^2(Q_t)}-\frac{\| u_0 \| ^{r+1}_{r+1}}{r+1} ,
\end{equation*}
which, together with \eqref{37}, immplies that
\begin{equation}\label{33}
\begin{split}
 \frac{\varepsilon}{p}\| \nabla u\|^p_p +\mu \left(\frac{1}{q}-\frac{1-\varepsilon}{p}\right) &  \| u\| ^q _\Wsqz	 +I(u) \\
 &\leq E(u_0)+C\left(\| \nabla u\|^2_{L^p(Q_t)}+\|\nabla u \|^{2(\gamma+1)}_{L^p(Q_t)}\right).
 \end{split}
\end{equation}
Now, by integrating \eqref{33}, we have that for all $t$:
$$
\int_0^tI(u(s))ds \leq t\left(E(u_0)+C\left(\| \nabla u\|^2_{L^p(Q_t)}+\|\nabla u \|^{2(\gamma+1)}_{L^p(Q_t)}\right)\right)-\frac{\varepsilon}{p} \|\nabla u\| ^p_{L^p(Q_t)}.
$$

Now we set $T^*= c_0\| u_0\| ^{\frac{1}{m}-r}_{\frac{1}{m}+1}$ where $c_0=\frac{1+m}{c_1(mr-1)}$, with $c_1= c_\varepsilon({\frac{1}{m}+1})c_r$ and $c_r$ is the embedding constant. We assume for the moment existence of a weak-mild solution in $(0,T^*]$. We have, using \eqref{ineqener}, that for all $t\leq T^*$:
$$
\int_0^t I(u(s)) ds \leq 0.
$$
Now, taking the test function $u$ in the equation, we get, for a.e. $t$, that
$$
\frac{m}{m+1}d_t\|u\|^{\frac{1}{m}+1}_{\frac{1}{m}+1}=c_\varepsilon \|u\|^{r+1}_{r+1}-\frac{p}{1-\varepsilon}I(u),
$$
and, then, for all $t\leq T^*$ we have also that
$$
\|u(t)\|^{\frac{1}{m}+1}_{\frac{1}{m}+1}\geq \|u_0\|^{\frac{1}{m}-1}_{\frac{1}{m}+1}+c_1\int_0^t\|u(t)\|^{r+1}_{\frac{1}{m}+1}
$$
which gives blow up of the quantity $\|u(t)\|_{\frac{1}{m}+1}$ at a finite value of $t$ smaller than or equal to $T^*$.

For the case $r+1>q>p$ we have can obtain the result by setting the functional
$$
I(u)=\frac{\mu}{q}\| u\| ^q _\Wsqz+\frac{1}{q}\| \nabla u\|^p_p-\frac{1}{r+1}\| u\| ^{r+1}_{r+1}
$$
and arguing similarly, since now we have the estimate,
$$
\left(\frac{1}{p}-\frac{1}{q}\right)\|\nabla u\|^p_p-C(\| \nabla u\|^2_{L^p(Q_t)}+\|\nabla u \|^{2(\gamma+1)}_{L^p(Q_t)})+I(u) \leq E(u_0).
$$
\end{proof}

\section*{Acknowledgement} We thank Jaques Giacomoni for helpful discussions on the topic.

\section*{Funding sources} Loïc Constantin was supported by a fully funded public doctoral contract, provided by the University of Pau (Université de Pau et des Pays de l’Adour – UPPA), the funding comes entirely from a French public institution under the 2023 UPPA/UPV cross-border call for projects (AAP UPPA/UPV 2023).  Carlota M. Cuesta was partially supported by the research
group grant IT1615-22 funded by the Basque Government, and by the grant PID2021-126813NB-I00 funded
by MICIU/AEI/10.13039/501100011033 and by “ERDF A way of making Europe”. 

\appendix
\section{Appendix:}

In this section we list some asuxiliary results that are needed in the proofs of the main results.

We start with the basic inequalities appropriate for our nonlinear operator. Using $p\geq 2$ we have with \cite{simon},
\begin{propriete}\label{inegalg}
There exist $c_1$, $c_2$ positive constants such that for all $\xi , \eta \in \mathbb{R}^d$:

\begin{align}\label{ineg1}\tag{A1}
||\xi|^{p-2}\xi -|\eta|^{p-2}\eta|\leq c_1 \begin{array}{lr}  |\xi -\eta |(|\xi|+|\eta|)^{p-2}, \\
%|\xi-\eta|^{p-1} \text{ if },
\end{array} 
\end{align}

\begin{align}\label{ineg2}\tag{A2}
(|\xi|^{p-2}\xi -|\eta|^{p-2}\eta).(\xi-\eta) \geq c_2  \begin{array}{lr}  |\xi -\eta |^{p}. \\
%\frac{|\xi-\eta|^{2}}{(|\xi|+|\eta|)^{2-p}}.
\end{array}
\end{align}

\end{propriete}

We also need the next integration by parts.
\begin{lemma}\label{ipp}\cite[Lemma 7.3, p.191]{roubi}

We have the continuous embedding $H^1(0,T;L^2(\Omega)) \hookrightarrow C([0,T];L^2(\Omega))$, and for all $u,v\in H^1(0,T;L^2(\Omega))$ and all $t\in[0,T]$:

\begin{equation*}
\Big[\int_\Omega uv\Big]^t_0=\int_0^t \int_\Omega( u\partial _t v+v\partial _t u).
\end{equation*}

Where $[a(s)]^t_0=a(t)-a(0)$.

\end{lemma}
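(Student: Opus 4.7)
The plan is to prove the statement in two stages corresponding to its two assertions. First I would establish the continuous embedding $H^1(0,T;L^2(\Omega)) \hookrightarrow C([0,T];L^2(\Omega))$ via an absolute-continuity argument in the Bochner framework. Given $u\in H^1(0,T;L^2(\Omega))$, the weak time derivative $\partial_t u$ lies in $L^2(0,T;L^2(\Omega))$, so by the Bochner fundamental theorem of calculus there is a representative (still called $u$) such that for a.e.\ $s,t\in[0,T]$,
\begin{equation*}
u(t)-u(s)=\int_s^t \partial_\tau u(\tau)\,d\tau
\end{equation*}
as an identity in $L^2(\Omega)$. Cauchy--Schwarz then yields
\begin{equation*}
\|u(t)-u(s)\|_{L^2(\Omega)} \leq |t-s|^{1/2}\,\|\partial_t u\|_{L^2(0,T;L^2(\Omega))},
\end{equation*}
so this representative extends uniquely to a $\tfrac12$-Hölder continuous function on $[0,T]$ with values in $L^2(\Omega)$. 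Combining this with a mean-value bound to control $\|u(t_0)\|_{L^2}$ at some $t_0$ where it is finite, the uniform estimate $\|u\|_{C([0,T];L^2)}\leq C\,\|u\|_{H^1(0,T;L^2)}$ follows, giving the embedding.

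Second, for the integration-by-parts formula I would argue by density. On smooth functions $u,v\in C^1([0,T];L^2(\Omega))$ the pointwise product rule
\begin{equation*}
\frac{d}{dt}\int_\Omega u(t)\,v(t)\,dx=\int_\Omega\bigl(u\,\partial_t v+v\,\partial_t u\bigr)\,dx
\end{equation*}
holds by standard differentiation of Bochner integrals, and integrating from $0$ to $t$ delivers the desired identity. To pass to general $u,v\in H^1(0,T;L^2(\Omega))$, I would invoke the density of $C^\infty([0,T];L^2(\Omega))$ in $H^1(0,T;L^2(\Omega))$, obtained e.g.\ by extending functions by reflection across $\{0,T\}$ and mollifying in time. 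Take approximating sequences $u_n,v_n$ converging to $u,v$ in $H^1(0,T;L^2(\Omega))$. By the first step, $u_n(\tau)\to u(\tau)$ and $v_n(\tau)\to v(\tau)$ in $L^2(\Omega)$ uniformly in $\tau\in[0,T]$, so the boundary term $\bigl[\int_\Omega u_n v_n\bigr]_0^t$ converges to $\bigl[\int_\Omega uv\bigr]_0^t$. The right-hand side $\int_0^t\int_\Omega(u_n\partial_\tau v_n+v_n\partial_\tau u_n)\,dxd\tau$ also converges, by bilinearity and Cauchy--Schwarz in $L^2(Q_T)$, using the uniform bounds $\|u_n\|_{L^2(Q_T)}$, $\|v_n\|_{L^2(Q_T)}$ and $\|\partial_\tau u_n\|_{L^2(Q_T)}$, $\|\partial_\tau v_n\|_{L^2(Q_T)}$. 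Passing to the limit yields the formula for all $u,v\in H^1(0,T;L^2(\Omega))$.

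The main obstacle is essentially bookkeeping: setting up the Bochner fundamental theorem of calculus correctly (to obtain a genuinely continuous representative and not merely an a.e.\ equality) and verifying the density result by a standard reflection--mollification argument. Once these two pieces are in place, both statements follow from elementary estimates. This is a well-known result in the theory of Bochner spaces, as reflected in the cited reference.
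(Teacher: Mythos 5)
Your proposal is correct. Note that the paper itself offers no proof of this lemma: it is imported verbatim from the cited reference (Roub\'i\v{c}ek, Lemma 7.3, p.~191), and your two-stage argument --- absolute continuity of the Bochner integral plus Cauchy--Schwarz for the embedding $H^1(0,T;L^2(\Omega))\hookrightarrow C([0,T];L^2(\Omega))$, then density of smooth functions in $H^1(0,T;L^2(\Omega))$ together with the product rule and passage to the limit for the integration-by-parts identity --- is precisely the standard textbook proof of that result, so it matches the intended source rather than deviating from it.
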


The next is a chain rule for a Lipschitz function composed with a $W^{1,p}$ function.
\begin{theorem}\cite[Th 2.1.11]{ziemer89}\label{chain}
Let $f:\mathbb{R}\to \mathbb{R}$ be a Lipschitz function and $u\in \Wp, p\geq 1.$ If $f\circ u \in L^p(\Omega),$ then $f\circ u \in \Wp$ and for almost all $x\in\Omega,$
$$D(f\circ u )(x)=f'(u(x))Du(x).$$

\end{theorem}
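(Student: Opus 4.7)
\textbf{Proof proposal for Theorem~\ref{chain}.} The plan is to reduce the result to the classical smooth-case chain rule by mollifying $f$ and then pass to the limit in $W^{1,p}$, after first ensuring that the candidate derivative $f'(u(x))Du(x)$ is unambiguously defined almost everywhere on $\Omega$.

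First I would address the well-definedness issue. By Rademacher's theorem, the Lipschitz function $f$ is differentiable outside a set $E\subset\mathbb{R}$ with $|E|=0$, and $|f'|\leq L$ almost everywhere, where $L$ is the Lipschitz constant. The potentially troublesome set is $u^{-1}(E)$, where $f'(u(x))$ is not canonically defined. The key lemma I would invoke (a standard consequence of the coarea-type behavior of Sobolev functions, see e.g.\ the analogous statement that $Du=0$ a.e.\ on $\{u=c\}$ for each constant $c$, extended to null target sets) is that $Du(x)=0$ for almost every $x$ with $u(x)\in E$. Hence the product $f'(u(x))Du(x)$, defined to be $0$ on $u^{-1}(E)$, is unambiguously an $L^p$ vector field, and it is dominated by $L|Du|$.

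Next I would regularize. Let $(\rho_\varepsilon)_{\varepsilon>0}$ be a standard mollifier on $\mathbb{R}$ and set $f_\varepsilon:=f*\rho_\varepsilon\in C^\infty(\mathbb{R})$. Then $f_\varepsilon\to f$ uniformly on compacts, $f_\varepsilon$ remains $L$-Lipschitz, and $f_\varepsilon'\to f'$ almost everywhere with $|f_\varepsilon'|\leq L$. For each $\varepsilon$ the smooth chain rule applies in the distributional sense on $W^{1,p}$: $f_\varepsilon\circ u\in W^{1,p}(\Omega)$ with $D(f_\varepsilon\circ u)=f_\varepsilon'(u)Du$. To pass to the limit I would use that $|f_\varepsilon(u)-f(u)|\leq L\cdot o(1)$ locally, combined with the hypothesis $f\circ u\in L^p(\Omega)$ and the bound $|f_\varepsilon(u)|\leq |f(u)|+L\varepsilon$, to get $f_\varepsilon(u)\to f(u)$ in $L^p(\Omega)$ by dominated convergence; similarly $f_\varepsilon'(u)Du\to f'(u)Du$ almost everywhere (using the lemma above to handle points where $u(x)\in E$), with $L|Du|\in L^p(\Omega)$ as a dominating function, giving $L^p$ convergence of the gradients.

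By closedness of the weak gradient under $L^p$ convergence of the function and its distributional derivatives, I conclude that $f\circ u\in W^{1,p}(\Omega)$ and $D(f\circ u)=f'(u)Du$ almost everywhere. The main obstacle is the first step: justifying $Du=0$ a.e.\ on $u^{-1}(E)$ for the null set $E$, since otherwise the formula on the right-hand side would depend on the representative chosen for $f'$. This is the genuinely nontrivial ingredient and is where one relies on the Sobolev-function machinery in Ziemer's monograph; everything else is routine mollification and dominated convergence.
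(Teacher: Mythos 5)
The paper does not actually prove this statement: it is imported verbatim from Ziemer \cite[Th 2.1.11]{ziemer89}, so there is no internal proof to compare against, only the cited monograph. Your argument is the standard textbook route to this result and is essentially sound: mollify $f$, apply the smooth chain rule to $f_\varepsilon\circ u$, and pass to the limit, using the null-set lemma (that $Du=0$ a.e.\ on $u^{-1}(E)$ whenever $|E|=0$) both to make $f'(u)Du$ well defined independently of the representative of $f'$ and to get $f_\varepsilon'(u)Du\to f'(u)Du$ a.e.\ in $\Omega$ even though $f_\varepsilon'\to f'$ only a.e.\ on $\mathbb{R}$ --- a point many sketches miss and which you handle correctly. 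Closedness of the weak gradient under $L^p$ limits then finishes the proof.

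Two caveats. First, as you acknowledge, the null-set lemma is the entire mathematical content here; for your proof to be non-circular you must make sure it can be established without appealing to the Lipschitz chain rule itself. It can: for instance via the absolute-continuity-on-lines characterization of $W^{1,p}(\Omega)$, which reduces it to the classical one-dimensional fact that a function differentiable at every point of a set whose image is Lebesgue-null has vanishing derivative a.e.\ on that set (see also Lieb--Loss, \emph{Analysis}, Thm.\ 6.19, for a direct proof). Since you defer exactly this ingredient to ``Ziemer's machinery,'' your proposal is best read as a correct reduction of the theorem to that lemma rather than a complete proof. Second, a minor point: your $L^p$ convergence $f_\varepsilon(u)\to f(u)$ from the uniform bound $|f_\varepsilon-f|\leq L\varepsilon$ uses that $\Omega$ has finite measure (true in this paper, where $\Omega$ is bounded); on a general open set one should instead pass to the limit in the sense of distributions, for which $L^1_{loc}$ convergence suffices, and invoke the hypothesis $f\circ u\in L^p(\Omega)$ only at the end to conclude $f\circ u\in W^{1,p}(\Omega)$.
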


We finish this appendix with the definition of an $\varepsilon$-discretization needed for the definition of a mild solution as given in \cite{barbu2}.
\begin{definition}[%$\varepsilon$-discretization and
$\varepsilon$-approximate solution]\label{defapp}
Let $f\in L^1(Q_T), \varepsilon>0$ and  $(t_i)_{i\in \{1,...,N\}}$ be a partition of $[0,T]$ such that for any $i$, $t_i-t_{i-1}<\varepsilon$.\\
An $\varepsilon$-approximate solution of \eqref{pbo} is a piecewise constant function $U:[0,T]\to L^1(\Omega)$ defined by $U(t)=U_i=U(t_{i})$ on $[t_i,t_{i+1})$ such that:
$$\|U(0)-u_0\|_{L^1(\Omega)}\leq\varepsilon$$
and
$$\frac{U_i-U_{i-1}}{t_i-t_{i-1}}+\pfrac (|U_i|^{m-1} U_i)=f_i \quad \mbox{on } [t_i,t_{i+1}),$$
where $(f_i)_{i\in \{1,...,N\}}$ satisfies $ \displaystyle\sum_{i=1}^N\int_{t_{i-1}}^{t_i}\|f(\tau)-f_i\|_{L^1(\Omega)}d\tau <\varepsilon$.

\end{definition}

\bibliographystyle{plain}
\bibliography{biblio}

\end{document}